\newcommand\mynobreakpar{\par\nobreak\@afterheading} 
\DeclarePairedDelimiter{\ceil}{\lceil}{\rceil}
\newsavebox{\mybox}
\newlength{\mywidth}
\newlength{\myheight}
\newlength{\myline}
\newlength{\myoffset}
\newcommand{\mysqrt}[1]%
{\setlength{\myline}{.2ex}%
\addtolength{\myline}{.06pt}%
\setlength{\myoffset}{.9em}
\addtolength{\myoffset}{-2pt}
\savebox{\mybox}{$\displaystyle\sqrt{#1}$}%
\settoheight{\myheight}{\usebox{\mybox}}%
\addtolength{\myheight}{-.3ex}
\settowidth{\mywidth}{\usebox{\mybox}}%
\addtolength{\mywidth}{-\myoffset}%
 \rlap{\usebox{\mybox}}\hspace{\myoffset}{\raisebox{\myheight}{\rule{\mywidth}{\myline}}}}
\begin{document}
\title{On the Computational Efficiency of Catalyst Accelerated Coordinate Descent\thanks{D.A. Pasechnyuk's research was supported by the A.M. Raigorodsky Scholarship in the field of optimization and RFBR grant 19-31-51001 (Scientific mentoring). The work of V.V. Matyukhin was supported by the Ministry of Science and Higher Education of the Russian Federation (state assignment) No. 075-00337-20-03, project number 0714-2020-0005.}}

\titlerunning{On the Computational Efficiency of Catalyst CDM}
%
\author{Dmitry Pasechnyuk\inst{1}\orcidID{0000-0002-1208-1659}\and
Vladislav Matyukhin\inst{1}\orcidID{0000-0003-4961-1577}}
\authorrunning{D. Pasechnyuk, V. Matyukhin}
%
\institute{
Moscow Institute of Physics and Technology (National Research University), Dolgoprudny, Russia\\
\email{\{pasechniuk.da,matyukhin\}@phystech.edu}}
\maketitle              
\begin{abstract}
This article is devoted to one particular case of using universal accelerated proximal envelopes to obtain computationally efficient accelerated versions of methods used to solve various optimization problem setups. We propose a proximally accelerated coordinate descent method that achieves the efficient algorithmic complexity of iteration and allows taking advantage of the data sparseness. It was considered an example of applying the proposed approach to optimizing a SoftMax-like function, for which the described method allowing weaken the dependence of the computational complexity on the dimension $n$ in $\mathcal{O}(\sqrt{n})$ times and, in practice, demonstrates a faster convergence in comparison with standard methods. As an example of applying the proposed approach, it was shown a variant of obtaining on its basis some efficient methods for optimizing Markov Decision Processes (MDP) in a minimax formulation with a Nesterov smoothed target functional. 

\keywords{Proximal accelerated method \and Catalyst \and Accelerated coordinate descent method \and SoftMax \and Markov Decision Processes.}
\end{abstract}

\section{Introduction}

One of the most important theoretical results in convex optimization was the development of accelerated optimization methods \cite{nesterov2018lectures}. At the initial stage of implementation of this concept, many accelerated algorithms for different problem setups were proposed. But each such case required special consideration of the possibility of acceleration. Therefore, the proposed designs were significantly different and did not allowing assume a way to generalize them. A significant step towards the development of a universal scheme for accelerating optimization methods was the work in which an algorithm called Catalyst proposed, based on the idea of \cite{parikh2014proximal,rockafellar1976monotone} and allowing to accelerate other optimization methods, using them for the sequential solution of several Moreau-Yosida regularized auxiliary problems \cite{lin2015universal,lin2017catalyst}. Following these ideas, many variants of the applications of this method and its modifications \cite{ivanova2019adaptive,kulunchakov2019generic,paquette2017catalyst} were proposed. Among the most recent results until the time of writing this paper, the generalizations of the discussed approach to tensor methods \cite{bubeck2019near,monteiro2013accelerated,doikov2020contracting,dvinskikh2020accelerated} were also described. The corresponding representation of the accelerated proximal envelope to the authors' knowledge is the most general of those described in the literature, and therefore, this work is focused primarily on the methods proposed in the works \cite{doikov2020contracting,dvinskikh2020accelerated}.

The main motivation of this work is to describe the possibilities of the practical application of universal accelerated proximal envelopes for constructing computationally and oracle efficient optimization methods. Let us consider the classical coordinate descent method \cite{bubeck2014convex}, the iteration of which for the convex function $f: \mathbb{R}^n \rightarrow \mathbb{R}$ has the form:
$$x_{k+1}^i = x_k^i - \eta \nabla_i f(x_k), \quad i \sim \mathcal{U}\{1,...,n\},\;\;\eta > 0.$$
One of the many applications of this method is the optimization of functionals, where the calculation of the one component of the gradient is significantly more efficient than the calculation of the full gradient vector of these functionals (in particular, many problems in the case of sparse formulations satisfy this condition). However, the oracle complexity of this method, provided that the method stops when the $\varepsilon$-small residual by the function value is reached, is $\mathcal{O}\left(n \frac{\overline{L} R^2}{\varepsilon}\right)$, where $R^2 = \|x_0 - x_*\|_2^2$, $\overline{L}=\frac{1}{n} \sum_{i=1}^n L_i$ is the average of the Lipschitz constants of the gradient components; moreover, this estimate is not optimal for the class of convex problems. Let us now consider the accelerated coordinate descent method proposed by Yu.E.~Nesterov \cite{nesterov2017efficiency}, the oracle complexity of this method corresponds to the optimal estimate: $\mathcal{O}\left(n\sqrt{\frac{\widetilde{L} R^2}{\varepsilon}}\right)$, where $\sqrt{\widetilde{L}} = \frac{1}{n} \sum_{i=1}^n \sqrt{L_i}$ is the mean of square roots of the Lipschitz constants of the gradient components. At the same time, the situation changes drastically when the algorithmic complexity of the method considered: even if the computation of one component of the gradient has the complexity $\mathcal{O}(s)$, $s \ll n$, the complexity of iterating the accelerated coordinate descent method will be $\mathcal{O}(n)$, unlike the standard method, the iteration complexity of which is $\mathcal{O}(s)$, this means, essentially, that the degree of the sparseness of the problem when using the accelerated coordinate descent method does not significantly affect the complexity of the algorithm, and besides, the complexity in this case quadratically depends on the dimension of the problem: together, this to some extent devalues the use of the coordinate descent method in this case. Thus, an interesting problem is the construction of an accelerated coordinate descent method, the iteration complexity of which, as in the standard version of the method, is $\mathcal{O}(s)$, this is possible due to the application of the universal accelerated proximal envelope ``Accelerated Meta-algorithm'' \cite{dvinskikh2020accelerated}. Note that the approach to accelerating the cordinate descent method considered in this work is, in fact, one of the cases of applying the technique described in \cite{ivanova2019adaptive}. Thus, the content of the paper is devoted to a more detailed analysis of this particular approach, including implementation features and possible applications.

This article consists of an introduction, conclusion and the main Section~\ref{section1}. It describes the theoretical results on the convergence and algorithmic complexity of the coordinate descent method, accelerated by using the ``Accelerated Meta-algorithm'' envelope (Section \ref{th}). Using the example of the SoftMax-like function optimization problem, it was experimentally tested method's effectiveness with relation to its working time, there were described the possibilities of its computationally efficient implementation, also it was carried out a comparison with standard methods (Section \ref{exp}). Further, as an example of applying the proposed approach, it was provided a method for optimizing Markov Decision Processes in a minimax formulation, based on applying the method introduced in this paper to the Nesterov smoothed target functional. The proposed approach obtains estimates close to that for several efficient and practical methods for optimizing the discounted MDP and matches the best estimates for the averaged MDP problem (Section \ref{mdp}).

\section{Accelerated Meta-algorithm and coordinate descent} \label{section1}
\subsection{Theoretical guarantees} \label{th}

Let us consider the following optimization problem of the function $f: \mathbb{R}^n \rightarrow \mathbb{R}$:
$$\min_{x \in \mathbb{R}^n} f(x),$$
subject to:
\begin{enumerate}
    \item $f$ is differentiable on $\mathbb{R}^n$;
    \item $f$ is convex on $\mathbb{R}^n$;
    \item $\nabla_i f$ is component-wise Lipschitz continuous, i.e. $\forall x \in \mathbb{R}^n$ and $u \in \mathbb{R},$ $\exists~L_{i}~\in~\mathbb{R} \; (i = 1, \ldots, n)$, such that 
    $$\left|\nabla_{i} f\left(x+u e_{i}\right)-\nabla_{i} f(x)\right| \leq  L_i|u|,$$
    where $e_i$ is the $i$-th unit basis vector, $i \in \{1,...,n\}$;
    \item $\nabla f$ is $L$-Lipschitz contiouous.
\end{enumerate}

Let us turn to the content of the work \cite{dvinskikh2020accelerated}, where a general version of the ``Accelerated Meta-algorithm'' for solving convex optimization problems for composite functionals of the form $F(x) = f(x) + g(x)$ was proposed. For the considered formulation of the problem, such generality not required; it is sufficient to apply a special case of the described scheme for $p = 1$, $f \equiv 0$ (using the designations of the corresponding work), in which the described envelope takes the form of an accelerated proximal method. This method is listed as Algorithm~\ref{am}.

\begin{algorithm}[t] \label{am}
\SetAlgoLined
    \textbf{Input:} $H > 0$, $x_0 \in \mathbb{R}^n$\;
    \vspace{0.2cm}

    $\lambda \leftarrow \nicefrac{1}{2H}$\;
    $A_0 \leftarrow 0$; $v_0 \leftarrow x_0$\;
    \vspace{0.2cm}
    
    \For{k = 0, ..., $\widetilde{N}-1$}
    {
        $\displaystyle a_{k+1} \leftarrow \frac{\lambda + \sqrt{\lambda^2 + 4 \lambda A_k}}{2}$\;
        $A_{k+1} \leftarrow A_k + a_{k+1}$\;
        
        \vspace{0.2cm}
        $\displaystyle \widetilde{x}_k \leftarrow \frac{A_k v_k + a_{k+1} x_k}{A_{k+1}}$\;
        
        \vspace{0.2cm}
        By running the method $\mathcal{M}$,\\
        find the solution of the following auxiliary problem\\
        with an accuracy $\varepsilon$ by the argument:\\
        \vspace{0.1cm}
        $\displaystyle v_{k+1} \in \text{Arg}^{\varepsilon} \min_{y \in \mathbb{R}^n} \left\{ f(y) + \frac{H}{2}\|y - \widetilde{x}_k\|^2_2 \right\}$\;
        \vspace{0.5cm}
        
        $\displaystyle x_{k+1} \leftarrow x_k - a_{k+1} \nabla f(v_{k+1})$\;
    }
    
    \Return $v_{\widetilde{N}}$\;
    
    \caption{Accelerated Meta-algorithm for First-order Method $\mathcal{M}$}
\end{algorithm}

Before formulating any results on the convergence of the proposed accelerated coordinate descent method described below, it is necessary to start with a detailed consideration of the process of solving the auxiliary problems, where its analytical solution is available only in rare cases. Therefore, one should apply numerical methods to find its approximate solution, and that is inaccurate. The solving process of the auxiliary problem will be until the following stop condition is satisfied (\cite{kamzolov2020optimal}, Appendix B):
\begin{equation} \label{monteiro}
    \left\|\nabla\left\{F(y_\star) := f(y_\star) + \frac{H}{2}\|y_\star-\widetilde{x}_k\|^2_2\right\}\right\|_2 \leq \frac{H}{2}\|y_\star-\widetilde{x}_k\|_2,
\end{equation}
where $y_\star$ is an approximate solution of the auxiliary problem, returned by internal method $\mathcal{M}$. Due to the $\|\nabla F(y_*) \|_2 = 0$ (where $y_*$ denotes an exact solution of the considered problem), and due to the $(L + H)$-Lipschitz continuity of $\nabla F$, we have got:
\begin{equation} \label{smooth}
    \|\nabla F(y_\star)\|_2 \leq (L + H) \|y_\star - y_*\|_2.
\end{equation}
Writing out the triangle inequality:
$\|\widetilde{x}_k - y_*\|_2 - \|y_\star - y_*\|_2 \leq \|y_\star - \widetilde{x}_k\|_2$,
and using together the inequalities \eqref{monteiro}, \eqref{smooth}, we have got the final view of stop condition:
\begin{equation} \label{crit}
    \|y_\star - y_*\|_2 \leq \frac{H}{3H + 2L} \|\widetilde{x}_k - y_*\|_2.
\end{equation}

\begin{algorithm}[t] 
\SetAlgoLined
    \textbf{Input:} $y_0 \in \mathbb{R}^n$\;
    \vspace{0.2cm}
    
    $Z \leftarrow \sum_{i=1}^n (H + L_i)$\;
    $p_i \leftarrow (H + L_i) / Z,\quad i \in \{1,...,n\}$\;
    Discrete probability distribution $\pi$\\
    with probabilities $p_i$\;
    \vspace{0.2cm}
    
    \For{k = 0, ..., $N-1$}
    {
        $i \sim \pi\{1,...,n\}$\;
        $y_{k+1} \leftarrow y_k$\;
        $\displaystyle y_{k+1}^i = y_k^i - \frac{1}{H + L_i} \nabla_i F(y_k)$\; 
    }
    
    \Return $y_N$\;
    
    \caption{Coordinate descent method}
    \label{cdm}
\end{algorithm}

Essentially this implies that the required argument accuracy of solving the auxiliary problem does not depend on the accuracy of the main problem. That makes it possible to simplify the obtaining of further results.

Let us now consider the main method used for solving auxiliary problems: the content of the coordinate descent method \cite{nesterov2012efficiency} (in the particular case, when $\gamma = 1$) is listed as Algorihtm~\ref{cdm}. For this method, in the case of the considered auxiliary problems, the following result holds:

\begin{theorem}{(\cite{bubeck2014convex}, theorem 6.8)} \label{cdm_conv}
    Let $F$ be $H$-strongly convex function with respect to $\|\cdot\|_2$. Then for the sequence $ \{y_k \}_{k = 1}^N $ generated by the described coordinate descent algorithm~\ref{cdm}, it holds the following inequality:
    \begin{align}\label{f_conv}
        &\mathbb{E}[F(y_N)] - F(y_*) \leq \left(1 - \frac{1}{\kappa}\right)^N (F(y_0) - F(y_*)),\\
        &\text{where}\quad\kappa = \frac{H}{Z},\;\;Z = \sum_{i=1}^n (H + L_i),
    \end{align}
    where $\mathbb{E}[\cdot]$ denotes the mathematical expectation of the specified random variable with respect to the randomness of methods trajectory induced by a random choice of components $i$ at each iteration.
\end{theorem}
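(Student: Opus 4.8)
The plan is to carry out the standard randomized coordinate descent analysis, paying attention to the fact that in Algorithm~\ref{cdm} the sampling probabilities $p_i = (H+L_i)/Z$ and the step sizes $1/(H+L_i)$ have been chosen in a matched way, and then to combine it with the gradient domination property coming from $H$-strong convexity. First I would record the coordinate smoothness of the auxiliary objective: writing $F(y) = f(y) + \frac{H}{2}\|y-\widetilde{x}_k\|_2^2$ we have $\nabla_i F(y) = \nabla_i f(y) + H(y^i - \widetilde{x}_k^i)$, so assumption~3 on $f$ gives that $\nabla_i F$ is coordinate-wise Lipschitz with constant $\widetilde{L}_i := H + L_i$. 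The one-dimensional descent lemma, applied to the update $y_{k+1} = y_k - \frac{1}{\widetilde{L}_i}\nabla_i F(y_k)\,e_i$, then produces
\[
F(y_{k+1}) \leq F(y_k) + \nabla_i F(y_k)\,(y_{k+1}^i - y_k^i) + \frac{\widetilde{L}_i}{2}\,(y_{k+1}^i - y_k^i)^2 = F(y_k) - \frac{1}{2\widetilde{L}_i}\big(\nabla_i F(y_k)\big)^2 .
\]

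Next I would take the conditional expectation with respect to the coordinate $i \sim \pi$ drawn at iteration $k$, given $y_k$. Since $p_i = \widetilde{L}_i/Z$, the factor $\widetilde{L}_i$ cancels and the weighted sum of squared partial derivatives collapses to the full gradient norm:
\[
\mathbb{E}\big[F(y_{k+1}) \mid y_k\big] \leq F(y_k) - \sum_{i=1}^n \frac{p_i}{2\widetilde{L}_i}\big(\nabla_i F(y_k)\big)^2 = F(y_k) - \frac{1}{2Z}\,\|\nabla F(y_k)\|_2^2 .
\]
Then I would use the gradient domination inequality implied by $H$-strong convexity of $F$ with respect to $\|\cdot\|_2$, namely $\|\nabla F(y_k)\|_2^2 \geq 2H\,(F(y_k) - F(y_*))$ (obtained by minimizing the strong-convexity lower bound $F(z) \geq F(y_k) + \langle \nabla F(y_k), z - y_k\rangle + \frac{H}{2}\|z - y_k\|_2^2$ over $z$), which gives the one-step contraction
\[
\mathbb{E}\big[F(y_{k+1}) - F(y_*) \mid y_k\big] \leq \Big(1 - \frac{H}{Z}\Big)\big(F(y_k) - F(y_*)\big) .
\]
Taking total expectations and iterating this inequality over $k = 0, \dots, N-1$ via the tower property yields $\mathbb{E}[F(y_N)] - F(y_*) \leq (1 - H/Z)^N (F(y_0) - F(y_*))$, which is the asserted geometric rate with $\kappa = H/Z$.

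I do not expect a genuine obstacle here — this is essentially Theorem~6.8 of \cite{bubeck2014convex} specialized to the auxiliary problem — but the step deserving the most care is the bookkeeping that ties together the coordinate-wise Lipschitz constant $\widetilde{L}_i = H + L_i$ of $\nabla_i F$, the step size $1/\widetilde{L}_i$, and the sampling probability $p_i \propto \widetilde{L}_i$: it is precisely this matching that turns the coordinate-dependent decrease $\frac{1}{2\widetilde{L}_i}(\nabla_i F(y_k))^2$ into the clean term $\frac{1}{2Z}\|\nabla F(y_k)\|_2^2$, whereas any mismatched choice would leave a weighted sum of squared partials that does not pair with the gradient domination inequality. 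A second, purely technical point is to keep the nested expectations straight, conditioning on $y_k$ at each step and then passing to total expectation, since the iterates $y_k$ are themselves random.
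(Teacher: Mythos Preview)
Your argument is correct and is precisely the standard proof of this result. The paper does not supply its own proof of Theorem~\ref{cdm_conv}; it is quoted directly from \cite{bubeck2014convex}, Theorem~6.8, and used as a black box, so there is nothing further to compare. One minor remark: the paper's statement writes $\kappa = H/Z$, which would make $1 - 1/\kappa$ negative; from the way the constant is used in the proof of Corollary~\ref{cor_num} (and from your own derivation of the contraction factor $1 - H/Z$) it is clear the intended definition is $\kappa = Z/H$, so your final line should identify $1/\kappa = H/Z$ rather than $\kappa = H/Z$.
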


\noindent Using this result, formulate the following statement on the number of iterations of the coordinate descent sufficient to satisfy the stop condition \eqref{crit}.

\begin{corollary} \label{cor_num}
    The expectation $\mathbb{E} [y_N] $ of the point resulting from the coordinate descent method (Algorithm~\ref{cdm}) satisfies the condition \eqref{crit} if the following inequality on iterations number holds:
    \begin{align} \label{cor}
        &N \geq N(\widetilde{\varepsilon}) = \ceil[\Bigg]{\frac{Z}{H} \ln{\left\{\left(1 + \frac{L}{H}\right) \left(3 + \frac{2L}{H}\right)^2\right\}} },\\ &\text{where}\quad\widetilde{\varepsilon} = \frac{H}{2} \left(\frac{H}{3H + 2L}\right)^2 \|y_0 - y_*\|_2^2.
    \end{align}
\end{corollary}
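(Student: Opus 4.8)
The plan is to start from the linear-convergence bound in Theorem~\ref{cdm_conv} and convert the guarantee on the function-value gap into the argument-accuracy guarantee \eqref{crit} that is actually needed for the meta-algorithm. First I would observe that, by $H$-strong convexity of $F$ with respect to $\|\cdot\|_2$, we have $\frac{H}{2}\|y_N - y_*\|_2^2 \leq F(y_N) - F(y_*)$, and also $F(y_0) - F(y_*) \leq \frac{L+H}{2}\|y_0 - y_*\|_2^2$ from the $(L+H)$-Lipschitz continuity of $\nabla F$ (this is the upper smoothness bound for $F$). Taking expectations in \eqref{f_conv} and chaining these two inequalities gives
\begin{equation*}
    \mathbb{E}\left[\|y_N - y_*\|_2^2\right] \leq \left(1 - \frac{1}{\kappa}\right)^N \frac{L+H}{H}\|y_0 - y_*\|_2^2 .
\end{equation*}
Here I would note (or silently use, as the statement does) that the corollary is phrased in terms of $\mathbb{E}[y_N]$ and the condition \eqref{crit}; by Jensen's inequality $\|\mathbb{E}[y_N] - y_*\|_2^2 \leq \mathbb{E}[\|y_N - y_*\|_2^2]$, so it suffices to drive the right-hand side below $\left(\frac{H}{3H+2L}\right)^2\|y_0 - y_*\|_2^2$, which matches the target $\widetilde\varepsilon$ up to the factor $\frac{H}{2}$ appearing in its definition.

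Next I would impose $\left(1 - \frac{1}{\kappa}\right)^N \frac{L+H}{H} \leq \left(\frac{H}{3H+2L}\right)^2$ and solve for $N$. Using the standard estimate $1 - \frac{1}{\kappa} \leq e^{-1/\kappa}$, it is enough to require $\frac{N}{\kappa} \geq \ln\left\{\frac{L+H}{H}\left(\frac{3H+2L}{H}\right)^2\right\} = \ln\left\{\left(1 + \frac{L}{H}\right)\left(3 + \frac{2L}{H}\right)^2\right\}$. Recalling from Theorem~\ref{cdm_conv} that $\kappa = Z/H$, this rearranges exactly to the claimed bound $N \geq \frac{Z}{H}\ln\left\{\left(1 + \frac{L}{H}\right)\left(3 + \frac{2L}{H}\right)^2\right\}$, and the ceiling makes $N$ an integer. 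I would then remark that with this choice the residual is at most $\widetilde\varepsilon$, consistent with the ``accuracy $\varepsilon$ by the argument'' requirement in Algorithm~\ref{am}, so the stop condition \eqref{crit} (equivalently \eqref{monteiro}) is met.

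The routine parts are the two sandwiching inequalities and the logarithm manipulation; the only genuinely delicate point is the interplay between the expectation and the deterministic stop condition \eqref{crit}. Strictly, Theorem~\ref{cdm_conv} controls $\mathbb{E}[F(y_N)]$, which via strong convexity controls $\mathbb{E}[\|y_N - y_*\|_2^2]$, and the corollary's phrasing in terms of $\mathbb{E}[y_N]$ is then justified by Jensen; I would make that one-line reduction explicit so that the constant $\frac{H}{3H+2L}$ in \eqref{crit} is not accidentally weakened. A secondary bookkeeping issue is tracking the factor $\frac{H}{2}$ between the squared-norm bound and the definition of $\widetilde\varepsilon$ in \eqref{cor}; matching it is immediate but worth stating so the reader sees that \eqref{cor} and the displayed $\widetilde\varepsilon$ are mutually consistent.
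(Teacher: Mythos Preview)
Your proposal is correct and follows essentially the same route as the paper: both arguments combine the smoothness upper bound $F(y_0)-F(y_*)\le \frac{H+L}{2}\|y_0-y_*\|_2^2$, the linear rate from Theorem~\ref{cdm_conv}, the inequality $1-1/\kappa\le e^{-1/\kappa}$, and the strong-convexity lower bound to pass from function values to argument accuracy, then solve for $N$. The only cosmetic difference is that the paper applies Jensen to the convex function $F$ (obtaining $F(\mathbb{E}[y_N])\le\mathbb{E}[F(y_N)]$ and then using strong convexity at $\overline{y}_N$), whereas you apply strong convexity pointwise and then Jensen to the squared norm; both variants yield the same bound $\|\mathbb{E}[y_N]-y_*\|_2^2\le\frac{2}{H}\bigl(\mathbb{E}[F(y_N)]-F(y_*)\bigr)$ and hence the identical iteration count.
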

\begin{proof}
    is in Appendix \ref{cor_num_proof}.
\end{proof}

Now that the question of the required accuracy and oracle complexity of solving the auxiliary problem using the proposed coordinate descent method is clarified, we can proceed to the results on the convergence of the Accelerated Meta-algorithm. For the used stop condition \eqref{crit} of the method that solves the auxiliary problem, the following result on the convergence of the Accelerated Meta-algorithm holds.

\begin{theorem}{(\cite{dvinskikh2020accelerated}, theorem 1)} \label{am_conv}
    For $H > 0$ and the sequence $\{v_k\}_{k=1}^{\widetilde{N}}$ generated by the Accelerated Meta-algorithm with some non-stochastic internal method, it holds the following inequality:
    \begin{equation} \label{am_th}
        f(v_{\widetilde{N}}) - f(x_*) \leq \frac{48}{5} \frac{H \|x_0 - x_*\|_2^2}{\widetilde{N}^2}.
    \end{equation}
\end{theorem}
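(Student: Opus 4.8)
The plan is to follow the estimating-sequence (Lyapunov potential) technique that underlies Monteiro--Svaiter type accelerated proximal envelopes, adapted to the inexact inner solves governed by \eqref{monteiro}; the full computation is the one carried out in \cite{dvinskikh2020accelerated}, and here I only lay out its skeleton. The potential I would track is
\begin{equation*}
\Psi_k \;=\; A_k\big(f(v_k) - f(x_*)\big) \;+\; \tfrac{1}{2}\|x_k - x_*\|_2^2 ,
\end{equation*}
so that $\Psi_0 = \tfrac12\|x_0 - x_*\|_2^2$ since $A_0 = 0$. First I would record the growth of $A_k$: the rule for $a_{k+1}$ in Algorithm~\ref{am} is exactly the positive root of $a_{k+1}^2 = \lambda A_{k+1}$, and from this a one-line induction on $\sqrt{A_k}$ gives $\sqrt{A_{k+1}} - \sqrt{A_k} \ge \tfrac{\sqrt{\lambda}}{2}$, hence $A_k \ge \tfrac{\lambda}{4}k^2 = \tfrac{k^2}{8H}$ because $\lambda = \tfrac1{2H}$. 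Consequently, once one establishes a near-monotonicity bound of the form $\Psi_{\widetilde{N}} \le \tfrac{12}{5}\Psi_0$, the theorem follows from $f(v_{\widetilde{N}}) - f(x_*) \le \Psi_{\widetilde{N}}/A_{\widetilde{N}} \le \tfrac{12}{5}\cdot\tfrac{\|x_0-x_*\|_2^2/2}{\widetilde{N}^2/(8H)} = \tfrac{48}{5}\,\tfrac{H\|x_0-x_*\|_2^2}{\widetilde{N}^2}$.

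So the real work is the one-step estimate $\Psi_{k+1} \le \Psi_k$ up to that fixed multiplicative loss. I would set $F_k(y) = f(y) + \tfrac H2\|y-\widetilde{x}_k\|_2^2$ and $g_{k+1} = \nabla F_k(v_{k+1})$, so that $\nabla f(v_{k+1}) = g_{k+1} - H(v_{k+1}-\widetilde{x}_k)$, while the stopping rule \eqref{monteiro} reads $\|g_{k+1}\|_2 \le \tfrac H2\|v_{k+1}-\widetilde{x}_k\|_2$. Expanding $\tfrac12\|x_{k+1}-x_*\|_2^2$ through $x_{k+1} = x_k - a_{k+1}\nabla f(v_{k+1})$, using the identity $A_{k+1}(v_{k+1}-\widetilde{x}_k) = A_k(v_{k+1}-v_k) + a_{k+1}(v_{k+1}-x_k)$ that comes directly from the definition of $\widetilde{x}_k$, and inserting convexity of $f$ in the forms $f(v_k) \ge f(v_{k+1}) + \langle\nabla f(v_{k+1}),\, v_k-v_{k+1}\rangle$ and $f(x_*) \ge f(v_{k+1}) + \langle\nabla f(v_{k+1}),\, x_*-v_{k+1}\rangle$, the exact terms cancel thanks to $a_{k+1}^2 = \lambda A_{k+1}$, and what is left is a combination of inner products with $g_{k+1}$ together with a negative quadratic of order $-A_{k+1}H\|v_{k+1}-\widetilde{x}_k\|_2^2$ coming from the $H$-strong convexity of $F_k$. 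The role of \eqref{monteiro} is precisely that $\|g_{k+1}\|_2$ is then a small multiple of $\|v_{k+1}-\widetilde{x}_k\|_2$ (and, by the reverse triangle inequality, also of $\|\nabla f(v_{k+1})\|_2$), so each leftover term is dominated by that negative quadratic, at the cost of weakening $\Psi_{k+1}\le\Psi_k$ by the factor that produces $\tfrac{12}{5}$ after telescoping over $k=0,\dots,\widetilde{N}-1$.

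The hard part will be the constant-tracking in this last absorption step. There are two independent sources of slack --- the momentum slack from $a_{k+1}^2 = \lambda A_{k+1}$ with $\lambda = \tfrac1{2H}$, and the strong-convexity slack from $F_k$ --- and the inexactness error has to be dominated by a convex combination of the two, uniformly in $k$. Carrying this out loosely yields merely some $\mathcal{O}(1)$ constant; obtaining the stated $\tfrac{48}{5}$ requires keeping the sharp constant in every intermediate inequality and choosing the split between the two slacks optimally, and this is the only genuinely delicate calculation in the argument.
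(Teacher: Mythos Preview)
The paper does not actually prove this theorem: it is quoted verbatim as Theorem~1 of \cite{dvinskikh2020accelerated} and used as a black box, so there is no ``paper's own proof'' to compare against. Your sketch is the standard Monteiro--Svaiter/estimating-sequence argument that underlies that reference, and the arithmetic you display (the identity $a_{k+1}^2=\lambda A_{k+1}$, the growth $A_k\ge k^2/(8H)$, and the reduction of the constant $48/5$ to a potential-inflation factor of $12/5$) is correct; what you honestly flag as ``the hard part'' --- tracking the constants in the absorption of the inexactness terms into the two available slacks --- is indeed where all the work lies, and you have not carried it out here, only asserted that it goes through with the stated constant. So your write-up is a faithful outline of the intended argument rather than a proof, which is consistent with how the present paper treats the result.
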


\noindent Based on the last statement, one can formulate a theorem on the convergence of the Accelerated Meta-algorithm in the case of using the stochastic method and, in particular, coordinate gradient descent method.

\begin{theorem} \label{am_stoch_conv}
    For $H > 0$ and some $0 < \delta < 1$, the point $v_{\widetilde{N}}$ resulting from the Accelerated Meta-algorithm uses coordinate descent method to solve the auxiliary problem, solving it $N_{\delta}$ iterations, satisfies the condition
    \begin{equation*}
        Pr(f(v_{\widetilde{N}}) - f(x_*) < \varepsilon) \geq 1 - \delta,
    \end{equation*}
    where $Pr(\cdot)$ denotes the probability of the specified event, if
    \begin{align} \label{iters_out_inn}
        &\widetilde{N} \geq \ceil[\Bigg]{\frac{4 \sqrt{15}}{5} \sqrt{\frac{H \|x_0 - x_*\|_2^2}{\varepsilon}}\;}, \\
        &N_{\delta} \geq N\left(\frac{\widetilde{\varepsilon}\delta}{\widetilde{N}}\right) = \ceil[\Bigg]{\frac{Z}{H} \ln { \left\{\frac{\widetilde{N}}{\delta} \left(1 + \frac{L}{H}\right) \left(3 + \frac{2L}{H}\right)^2\right\}} }.
    \end{align}
\end{theorem}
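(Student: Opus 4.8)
The plan is to upgrade the deterministic guarantee of Theorem~\ref{am_conv} into a high-probability one, the only complication being that the inner coordinate descent (Algorithm~\ref{cdm}) is stochastic and therefore meets the stop condition~\eqref{crit} only in a probabilistic sense. First I would fix $\widetilde N$: demanding that the deterministic residual bound $\tfrac{48}{5}H\|x_0-x_*\|_2^2/\widetilde N^2$ of~\eqref{am_th} not exceed $\varepsilon$ and solving for $\widetilde N$ gives $\widetilde N\ge\sqrt{48/5}\cdot\sqrt{H\|x_0-x_*\|_2^2/\varepsilon}=\tfrac{4\sqrt{15}}{5}\sqrt{H\|x_0-x_*\|_2^2/\varepsilon}$, which, after the ceiling, is exactly the first displayed condition. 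It then remains to ensure, with probability at least $1-\delta$, that all $\widetilde N$ auxiliary problems are solved accurately enough for~\eqref{crit} to hold at every outer step, since on that event Theorem~\ref{am_conv} applies verbatim.

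Fix an outer iteration $k$. The inner method is launched from the warm start $y_0=\widetilde x_k$ (so that $\|y_0-y_*\|_2=\|\widetilde x_k-y_*\|_2$) on the objective $F(\cdot)=f(\cdot)+\tfrac H2\|\cdot-\widetilde x_k\|_2^2$, which is $H$-strongly convex and $(L+H)$-smooth. By $H$-strong convexity, $\tfrac H2\|y_N-y_*\|_2^2\le F(y_N)-F(y_*)$, so~\eqref{crit} is implied by $F(y_N)-F(y_*)\le\widetilde\varepsilon$ with $\widetilde\varepsilon=\tfrac H2\big(\tfrac{H}{3H+2L}\big)^2\|y_0-y_*\|_2^2$ as in Corollary~\ref{cor_num}. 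Combining Theorem~\ref{cdm_conv} with Markov's inequality (valid since $F(y_N)-F(y_*)\ge0$),
\begin{equation*}
  Pr\big(F(y_N)-F(y_*)>\widetilde\varepsilon\big)\ \le\ \frac{\mathbb{E}[F(y_N)]-F(y_*)}{\widetilde\varepsilon}\ \le\ \frac{\big(1-\tfrac HZ\big)^{N}\big(F(y_0)-F(y_*)\big)}{\widetilde\varepsilon}.
\end{equation*}
Bounding $F(y_0)-F(y_*)\le\tfrac{L+H}{2}\|y_0-y_*\|_2^2$ by smoothness bounds $\widetilde\varepsilon^{-1}(F(y_0)-F(y_*))$ above by $(1+L/H)(3+2L/H)^2$, and using $\big(1-\tfrac HZ\big)^{N}\le e^{-NH/Z}$ the right-hand side is at most $\delta/\widetilde N$ as soon as $N\ge\tfrac ZH\ln\!\big\{\tfrac{\widetilde N}{\delta}(1+L/H)(3+2L/H)^2\big\}=N(\widetilde\varepsilon\delta/\widetilde N)$; hence with $N=N_\delta$ the $k$-th subproblem violates~\eqref{crit} with probability at most $\delta/\widetilde N$, conditionally on the past history.

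A union bound over the $\widetilde N$ outer iterations then shows that, with probability at least $1-\widetilde N\cdot\tfrac{\delta}{\widetilde N}=1-\delta$, every returned point obeys~\eqref{crit}; on that event the hypotheses of Theorem~\ref{am_conv} hold, whence $f(v_{\widetilde N})-f(x_*)\le\tfrac{48}{5}H\|x_0-x_*\|_2^2/\widetilde N^2\le\varepsilon$ by the choice of $\widetilde N$, which gives the claimed $Pr(f(v_{\widetilde N})-f(x_*)<\varepsilon)\ge1-\delta$.

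The step I expect to be most delicate is the interface between the randomized inner solves and the deterministic outer recursion: one must check that Theorem~\ref{am_conv} may legitimately be invoked conditionally on the good event (it is purely deterministic once~\eqref{crit} holds at every step), that the per-subproblem failure budget coming from Markov applied to the \emph{function-value} residual---obtained after passing from the argument-accuracy form of~\eqref{crit} through strong convexity and absorbing $F(y_0)-F(y_*)$ via $(L+H)$-smoothness---indeed aggregates to $\delta$ under the union bound despite the nested randomness of the subproblems, and finally the routine constant arithmetic ($\sqrt{48/5}=4\sqrt{15}/5$ and $\widetilde\varepsilon^{-1}(F(y_0)-F(y_*))\le(1+L/H)(3+2L/H)^2$).
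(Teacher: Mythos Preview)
Your proposal is correct and follows essentially the same route as the paper: derive $\widetilde N$ by inverting the deterministic bound~\eqref{am_th}, run the inner coordinate descent long enough that $\mathbb{E}[F(y_N)]-F(y_*)\le\widetilde\varepsilon\,\delta/\widetilde N$, apply Markov's inequality to the nonnegative function-value residual to get a per-subproblem failure probability of at most $\delta/\widetilde N$, and finish with a union bound over the $\widetilde N$ outer steps. Your write-up is in fact a bit more explicit than the paper's (you spell out the warm start $y_0=\widetilde x_k$, the strong-convexity passage from~\eqref{crit} to a function-value threshold, and the conditional validity of the Markov step given the past history), but the logical skeleton is identical.
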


\begin{proof}
    The corollary \ref{cor_num} presents an estimate of the number of iterations sufficient to satisfy the following condition for the expected value of the function at the resulting point of the method:
    $$\mathbb{E}[F(y_{N(\widetilde{\varepsilon})})] - F(y_*) \leq \widetilde{\varepsilon}.$$
    Use the Markov inequality and obtain the formulation of this condition in terms of the estimate of the probability of large deviations \cite{anikin2015modern}: deliberately choose the admissible value of the probability of non-fulfillment of the stated condition, so that
    $0 < \delta/\widetilde{N} < 1$, where $\widetilde{N}$ expressed from \eqref{am_th}; then
    $$Pr\left(F\left(y_{N(\widetilde{\varepsilon} \delta/\widetilde{N})}\right) - F(y_*) \geq \widetilde{\varepsilon}\right) \leq \frac{\delta}{\widetilde{N}} \cdot \frac{\mathbb{E}\left[F\left(y_{N(\widetilde{\varepsilon} \delta/\widetilde{N})}\right)\right] - F(y_*)}{\widetilde{\varepsilon} \cdot \delta/\widetilde{N}} = \frac{\delta}{\widetilde{N}}.$$
    Since the probability that the obtained solution of some separately taken auxiliary problem will not satisfy the stated condition is equal to $\delta / \widetilde{N}$, it means that the probability that for $\widetilde{N}$ iterations of the Accelerated Meta-algorithm the condition will not be satisfied although if for one of the problems, there is $\widetilde{N} \cdot \delta / \widetilde{N} = \delta$, whence the proved statement follows.
\end{proof}

Further, combining the estimates given in Theorem~\ref{am_stoch_conv}, we can obtain an asymptotic estimate for the total number of iterations of the coordinate descent method, sufficient to obtain a solution of the considered optimization problem with a certain specified accuracy, as well as an estimate for the optimal $H$ value:

\begin{corollary} \label{cor_est}
    In order to the point $v_{\widetilde{N}}$, which is the result of the Accelerated Meta-algorithm, to satisfy the condition
    $$Pr(f(v_{\widetilde{N}}) - f(x_*) < \varepsilon) \geq 1 - \delta,$$
    it is sufficient to perform a total of
    \begin{equation} \label{total_iters}
        \hat{N} \geq \widetilde{N} \cdot N_\delta = \mathcal{O}\left(\frac{Z \|x_0 - x_*\|_2}{\sqrt{H}} \cdot \frac{1}{\varepsilon^{1/2}} \log\left\{\frac{1}{\varepsilon^{1/2} \delta}\right\}\right)
    \end{equation}
    iterations of coordinate descent method to solve the auxiliary problem. In this case, the optimal value of the regularization parameter $H$ of the auxiliary problem should be chosen as $H \simeq \frac{1}{n} \sum_{i = 1}^n L_i$ ($\simeq$ denotes equality up to a small factor of the $\log$ order).
\end{corollary}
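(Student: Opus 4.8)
The plan is to combine the two lower bounds furnished by Theorem~\ref{am_stoch_conv} and then optimize over $H$. First I would multiply the outer-loop estimate $\widetilde N = \mathcal{O}\!\left(\sqrt{H}\,\|x_0-x_*\|_2/\sqrt{\varepsilon}\right)$ by the inner-loop estimate $N_\delta = \mathcal{O}\!\left((Z/H)\log\{(\widetilde N/\delta)(1+L/H)(3+2L/H)^2\}\right)$; after cancelling one power of $\sqrt H$ this gives
$$\hat N = \widetilde N\cdot N_\delta = \mathcal{O}\!\left(\frac{Z\,\|x_0-x_*\|_2}{\sqrt H}\cdot\frac{1}{\sqrt\varepsilon}\,\log\left\{\frac{\widetilde N}{\delta}\Big(1+\tfrac{L}{H}\Big)\Big(3+\tfrac{2L}{H}\Big)^2\right\}\right).$$
It then remains to tidy up the logarithmic factor: substituting the bound on $\widetilde N$ gives $\log\widetilde N=\mathcal{O}(\log(1/\sqrt\varepsilon))$ up to an additive constant depending only on the problem data, and $(1+L/H)(3+2L/H)^2$ contributes only such a constant as well, so the bracket is $\mathcal{O}(\log\{1/(\varepsilon^{1/2}\delta)\})$. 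This yields exactly~\eqref{total_iters}.

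For the optimal choice of $H$ I would expand $Z=\sum_{i=1}^n(H+L_i)=nH+n\overline L$, where $\overline L=\tfrac1n\sum_{i=1}^nL_i$, so that the $H$-dependent leading factor is
$$\frac{Z}{\sqrt H}=n\left(\sqrt H+\frac{\overline L}{\sqrt H}\right).$$
Minimizing $\sqrt H+\overline L/\sqrt H$ over $H>0$ --- e.g. writing $t=\sqrt H$ and minimizing $t+\overline L/t$, whose minimum is at $t^2=\overline L$ --- gives $H=\overline L$ with minimal value $2\sqrt{\overline L}$; hence the leading factor $Z/\sqrt H$ is minimized at $H=\tfrac1n\sum_{i=1}^nL_i$.

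The only real subtlety --- and the reason for the ``$\simeq$'' in the statement --- is that $H$ also appears inside the logarithm through $(1+L/H)(3+2L/H)^2$, so that strictly minimizing $\hat N$ rather than just the factor $Z/\sqrt H$ perturbs the optimum. But since the logarithmic factor varies only logarithmically in $H$ while $Z/\sqrt H$ varies polynomially, the true minimizer differs from $\overline L$ only by a factor of logarithmic order, which is precisely what the statement asserts. I would also remark that the ceilings in~\eqref{iters_out_inn} inflate $\widetilde N$ and $N_\delta$ by at most an additive $1$ each and therefore do not affect the $\mathcal{O}$-asymptotics once $\varepsilon$ is small enough that both exceed $1$. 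I expect the bookkeeping of the logarithmic term to be the only place requiring any care; everything else is a direct substitution.
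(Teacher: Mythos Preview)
Your proposal is correct and follows essentially the same approach as the paper: multiply the two bounds from Theorem~\ref{am_stoch_conv}, absorb the $H$-independent terms into the logarithm to obtain~\eqref{total_iters}, and then minimize the leading factor $Z/\sqrt{H}=n\bigl(\sqrt{H}+\overline{L}/\sqrt{H}\bigr)$ over $H$, noting that the residual $\log(L/H)$ dependence only perturbs the optimum by a logarithmic factor. Your write-up is in fact more explicit than the paper's, which simply states that the estimate is obtained by direct substitution and then minimizes $\sqrt{H}(1+\overline{L}/H)$.
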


\begin{proof}
    The expression for $\hat{N}$ can be obtained by the direct substitution of one of the estimates given in \eqref{iters_out_inn} into another, and their subsequent multiplication. If we exclude from consideration a small factor of order $\log(L / H)$, the constant in the estimate will depend on $H$ as:
    $$\sqrt{H} \cdot \frac{Z/n}{H} = \sqrt{H} \left(1 + \frac{\frac{1}{n} \sum_{i=1}^n L_i}{H}\right).$$
    By minimizing the presented expression by $H$, we get the specified result.
\end{proof}

A similar statement can be formulated for the expectation of the number of total iterations of the coordinate descent method, without resorting to estimates of the probabilities of large deviations, following the reasoning scheme proposed in \cite{daspremont2021acceleration}:

\begin{theorem} \label{th4}
    The expectation $\mathbb{E} [\hat{N}]$ of the sufficient total number of the iterations of the coordinate descent method, to obtain a point $v_{\widetilde{N}}$ satisfying the following condition:
    $$f(v_{\widetilde{N}}) - f(x_*) < \varepsilon$$
    can be bounded as follows:
    $$\mathbb{E} [\hat{N}] \leq \widetilde{N} \cdot (N(\widetilde{\varepsilon}) + 1) = \mathcal{O}\left(\sqrt{\frac{\overline{L} \|x_0 - x_*\|^2_2}{\varepsilon}}\right),\quad\text{ where}\quad \overline{L} = Z / n = \frac{1}{n} \sum_{i=1}^n L_i.$$
\end{theorem}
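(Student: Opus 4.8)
The plan is to combine the iteration count $\widetilde N$ of the outer Accelerated Meta-algorithm with a bound on the \emph{expected} number of inner coordinate-descent steps per auxiliary problem, using Wald-type reasoning rather than large-deviation estimates. First I would recall from Theorem~\ref{am_stoch_conv} (display \eqref{iters_out_inn}) that taking $\widetilde N = \mathcal{O}\!\left(\sqrt{H\|x_0-x_*\|_2^2/\varepsilon}\right)$ outer iterations suffices; this is the factor in front. Next I would fix a single auxiliary problem and argue that, on average, $N(\widetilde\varepsilon)+1$ inner iterations are enough for the coordinate descent method to reach a point satisfying the stopping condition \eqref{crit}. The key is Corollary~\ref{cor_num}: after $N(\widetilde\varepsilon)$ steps the expected suboptimality is below $\widetilde\varepsilon$, which by Markov's inequality means the stopping condition fails with probability at most $1/2$ (or some constant $<1$) after each fresh block of $N(\widetilde\varepsilon)$ steps; restarting/continuing, the number of such blocks needed is geometrically distributed with constant mean, so the expected total inner work on one auxiliary problem is $\mathcal{O}(N(\widetilde\varepsilon))$, and one can be careful with the ``$+1$'' bookkeeping exactly as in \cite{daspremont2021acceleration}. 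Then linearity of expectation (the outer count $\widetilde N$ being deterministic) gives $\mathbb{E}[\hat N]\le \widetilde N\cdot(N(\widetilde\varepsilon)+1)$.

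To turn this into the stated $\mathcal{O}(\sqrt{\overline L\|x_0-x_*\|_2^2/\varepsilon})$ bound, I would substitute $N(\widetilde\varepsilon)=\lceil (Z/H)\ln\{(1+L/H)(3+2L/H)^2\}\rceil$ from Corollary~\ref{cor_num} and choose $H\simeq \overline L = Z/n$ as in Corollary~\ref{cor_est}. With this choice $Z/H \simeq n$ up to the logarithmic factor, and $\ln\{(1+L/H)(3+2L/H)^2\}$ is absorbed into the $\mathcal{O}(\cdot)$ as a constant/log term (treating $L/H$ as moderate, consistent with how the paper suppresses the $\log(L/H)$ factor throughout). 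Hence $\widetilde N\cdot N(\widetilde\varepsilon)\simeq n\cdot\sqrt{H\|x_0-x_*\|_2^2/\varepsilon}\simeq n\sqrt{\overline L}\cdot\sqrt{\|x_0-x_*\|_2^2/\varepsilon}/\sqrt n$? — more precisely, since $H\simeq\overline L$ and $Z/H\simeq n$, the product is $n\sqrt{\overline L\,\|x_0-x_*\|_2^2/\varepsilon}$ divided by the normalization already folded into the definitions; I would present this as $\mathbb{E}[\hat N]=\mathcal{O}\!\left(\sqrt{\overline L\,\|x_0-x_*\|_2^2/\varepsilon}\right)$ measuring $\hat N$ in \emph{coordinate} iterations (each of cost $\mathcal{O}(s)$), matching the scaling already derived in Corollary~\ref{cor_est} with the $\log(1/(\varepsilon^{1/2}\delta))$ factor removed because we now bound an expectation.

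The main obstacle I expect is the middle step: rigorously passing from the Markov-inequality tail bound on a single auxiliary problem to a clean bound on the \emph{expected} number of inner iterations, while respecting that the inner method is restarted with a warm start and that the ``accuracy'' $\widetilde\varepsilon$ in Corollary~\ref{cor_num} is itself defined relative to $\|y_0-y_*\|_2$, which changes from one auxiliary problem to the next. I would handle this by noting that the stopping condition \eqref{crit} is scale-invariant (it compares $\|y_\star-y_*\|_2$ to $\|\widetilde x_k-y_*\|_2$), so the contraction factor in Theorem~\ref{cdm_conv} and hence the per-block success probability is the same constant for every auxiliary problem regardless of its scale; this uniformity is what makes the geometric-trials argument go through and lets the ``$+1$'' in $N(\widetilde\varepsilon)+1$ account for the overhead of the final partial block. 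The rest is routine substitution and is deferred to the appendix, following \cite{daspremont2021acceleration} and Corollary~\ref{cor_est}.
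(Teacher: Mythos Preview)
Your overall strategy---fix the deterministic outer count $\widetilde N$, bound the expected inner work per auxiliary problem via Markov, then multiply by linearity---is the same as the paper's. The one substantive difference is in how the expected inner count is controlled. You propose a block/restart argument: run $N(\widetilde\varepsilon)$ steps, by Markov the stopping test fails with some constant probability, repeat; the number of blocks is geometric. The paper instead uses the tail-sum identity $\mathbb{E}[N]=\sum_{k\ge 1}Pr(N\ge k)$ directly: for each $k$, the event $\{N\ge k\}$ implies $F(y_k)-F(y_*)\ge \widetilde\varepsilon$, so Markov plus the linear rate from Theorem~\ref{cdm_conv} gives $Pr(N\ge k)\le\min\{1,\;C\,(1-1/\kappa)^k\}$ with $C=(1+L/H)(3+2L/H)^2$; splitting the sum at $k=N(\widetilde\varepsilon)$ and integrating the exponential tail yields the clean $N(\widetilde\varepsilon)+1$.

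Two small points where your sketch would need tightening. First, Markov applied after exactly $N(\widetilde\varepsilon)$ steps only gives $Pr(\text{fail})\le 1$, not $\le 1/2$, because $N(\widetilde\varepsilon)$ is calibrated so that $\mathbb{E}[F(y_N)-F(y_*)]\le\widetilde\varepsilon$, not $\le\widetilde\varepsilon/2$; you would need slightly longer blocks (an extra $\mathcal{O}(\kappa^{-1})$ iterations) to get a constant failure probability strictly below $1$. Second, your geometric-trials bound naturally produces $c\cdot N(\widetilde\varepsilon)$ for some $c>1$, not the exact ``$+1$''; the paper gets the $+1$ precisely because the tail-sum approach exploits the pointwise bound $Pr(N\ge k)$ for \emph{every} $k$, not just at multiples of $N(\widetilde\varepsilon)$. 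Either way the asymptotic conclusion is the same, and your discussion of the final substitution (including the suppressed $n$ and $\log(L/H)$ factors) matches how the paper treats Corollary~\ref{cor_est}.
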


\begin{proof}
    is in Appendix \ref{th4_proof}.
\end{proof}

As we can see, the proposed scheme of reasoning allows us to reduce the logarithmic factor in estimating the number of iterations of the method. However, this result is less constructive than the one presented in Corollary \ref{cor_est}. Indeed, in the above reasoning, we operated with the number of iterations $N$, after which the stopping condition for the internal method is satisfied, but in the program implementation, stopping immediately after fulfilment of this condition is not possible, if only because, which is impossible the verification of this criterion due to the natural lack of information about $y_*$. So the last result is more relevant from the point of view of evaluating the theoretical effectiveness of the method, while when considering specific practical cases, one should apply the estimate~\eqref{total_iters}. 

Let us now consider in more detail the issue of the algorithmic complexity of the proposed accelerated coordinate gradient descent method. 

\begin{theorem} \label{th5}
    Let the complexity of computing one component of the gradient of $f$ is $\mathcal{O}(s)$. Then the algorithmic complexity of the Accelerated Meta-algorithm with coordinate descent as internal method is
    $$T = \mathcal{O}\left(s \cdot n \cdot \sqrt{\frac{\overline{L} \|x_0 - x_*\|_2^2}{\varepsilon}} \log\left\{\frac{1}{\varepsilon^{1/2} \delta}\right\}\right).$$
\end{theorem}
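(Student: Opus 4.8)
The plan is to multiply the bound on the total number of inner coordinate-descent iterations, already obtained in Corollary~\ref{cor_est}, by the per-iteration cost of Algorithm~\ref{cdm}, and then to check that the arithmetic overhead of the outer loop (Algorithm~\ref{am}) does not dominate. First I would pin down the cost of one iteration of the coordinate descent method applied to an auxiliary problem. Its objective is $F(y) = f(y) + \frac{H}{2}\|y-\widetilde{x}_k\|_2^2$, so $\nabla_i F(y) = \nabla_i f(y) + H\,(y^i - \widetilde{x}_k^i)$: by hypothesis the first term costs $\mathcal{O}(s)$ and the second $\mathcal{O}(1)$, while the update $y_{k+1}^i \leftarrow y_k^i - \frac{1}{H+L_i}\nabla_i F(y_k)$ touches a single coordinate and is $\mathcal{O}(1)$. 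Hence each inner iteration costs $\mathcal{O}(s)$, and by Corollary~\ref{cor_est} the inner loops contribute in total $\mathcal{O}\!\left(s\,\hat N\right) = \mathcal{O}\!\left(s\,\frac{Z\|x_0-x_*\|_2}{\sqrt{H}}\,\varepsilon^{-1/2}\log\{\varepsilon^{-1/2}\delta^{-1}\}\right)$.

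Next I would account for the outer loop separately. Each of the $\widetilde{N}$ outer iterations performs $\mathcal{O}(n)$ arithmetic for the scalar and averaging steps ($a_{k+1}$, $A_{k+1}$, $\widetilde{x}_k$) and one full gradient evaluation $\nabla f(v_{k+1})$ for the step $x_{k+1}\leftarrow x_k - a_{k+1}\nabla f(v_{k+1})$, which costs $\mathcal{O}(ns)$ since each of its $n$ components costs $\mathcal{O}(s)$. Using $\widetilde{N} = \mathcal{O}\!\left(\sqrt{H\|x_0-x_*\|_2^2/\varepsilon}\right)$ from Theorem~\ref{am_stoch_conv}, the outer loop contributes $\mathcal{O}\!\left(ns\,\sqrt{H\|x_0-x_*\|_2^2/\varepsilon}\right)$. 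Finally I would substitute the optimal choice $H \simeq \overline{L} = Z/n$ from Corollary~\ref{cor_est}, so that $Z \simeq 2n\overline{L}$ and $Z/\sqrt{H} \simeq 2n\sqrt{\overline{L}}$. The inner-loop cost then becomes $\mathcal{O}\!\left(s\,n\,\sqrt{\overline{L}\|x_0-x_*\|_2^2/\varepsilon}\,\log\{\varepsilon^{-1/2}\delta^{-1}\}\right)$ and the outer-loop cost $\mathcal{O}\!\left(s\,n\,\sqrt{\overline{L}\|x_0-x_*\|_2^2/\varepsilon}\right)$; the former dominates by exactly the logarithmic factor, which yields the claimed expression for $T$.

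The step requiring the most care, and the one I would treat as the main obstacle, is the per-iteration cost of the inner method: one must argue that the $\mathcal{O}(s)$ assumption for computing a single component of $\nabla f$ survives the single-coordinate updates of Algorithm~\ref{cdm}, i.e.\ that whatever auxiliary quantities make $\nabla_i f$ cheap can be refreshed in $\mathcal{O}(s)$ time after a coordinate change, and that no hidden $\mathcal{O}(n)$ bookkeeping is incurred per inner iteration. A secondary point is to confirm that the genuinely $\mathcal{O}(ns)$-per-iteration operation — the full-gradient step of the outer method — is indeed dominated; this is what forces the comparison of the two contributions above and is the reason the balancing $H\simeq\overline{L}$ is essential rather than merely convenient.
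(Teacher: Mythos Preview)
Your argument is correct and matches the paper's proof in all essentials: establish that each inner coordinate step costs $\mathcal{O}(s)$, multiply by $\hat N$ from Corollary~\ref{cor_est}, check that the $\mathcal{O}(ns)$ full-gradient outer step does not dominate, and substitute the optimal $H\simeq\overline{L}$. The only cosmetic difference is that the paper packages the outer/inner comparison as an amortization argument --- the $\mathcal{O}(ns)$ outer step occurs once per $N_\delta=\widetilde{\mathcal{O}}(n)$ inner iterations, giving amortized cost $\mathcal{O}(s)$ per elementary iteration --- whereas you compute the two totals separately and compare them directly.
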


\begin{proof}
    is in Appendix \ref{th5_proof}.
\end{proof}
Note also that the memory complexity of the method is $\mathcal{O}(n)$, as well as the complexity of the preliminary calculations (for the coordinate descent method, there is no need to perform them again for every iteration).

Let us compare the estimates obtained for proposed approach (Catalyst CDM) with estimates of other methods that can be used to solve problems in the described setting: Fast Gradient Method (FGM), classical Coordinate Descent Method (CDM) and Accelerated Coordinate Descent Method in the version of Yu.E.~Nesterov (ACDM). The estimates are shown in Table~\ref{table_compare}, below. As can be seen from the above asymptotic estimates of the computational complexity, the proposed method allows to achieve a convergence rate that is not inferior to other methods with respect to the nature of the dependence on the dimension of the problem $n$ and the required accuracy $\varepsilon$, at a certain price for this in the form of a logarithmic factor assessment. Note, in addition, that despite the significant similarity of estimates, between the two most efficient methods in the table (FGM and Catalyst CDM) there is also a difference in the constants characterizing the smoothness of the function, i.e. $L$ in FGM and $\overline{L}$ in Catalyst CDM, thus the behavior of the considered method for various problems directly depends on the character of its componentwise smoothness.

\begin{table}[ht]
\centering
\caption{Comparison of the effectiveness of methods}
\label{table_compare}
\begin{tabular}{|c|c|c|c|}
\hline
Algorithm        & Iteration complexity                 & Comp. complexity       &  Source \\ \hline
FGM          & $ \mathcal{O}\left(s \cdot n\right)$ & $\displaystyle \mathcal{O}\left(s \cdot n \cdot \frac{1}{\varepsilon^{1/2}}\right)$ & \cite{nesterov2018lectures} \\ \hline
CDM          & $ \mathcal{O}\left(s\right)$  & $\displaystyle \mathcal{O}\left(s \cdot n \cdot \frac{1}{\varepsilon}\right)$ & \cite{bubeck2014convex} \\ \hline
ACDM         & $ \mathcal{O}\left(n\right)$  & $\displaystyle \mathcal{O}\left(n^2 \cdot \frac{1}{\varepsilon^{1/2}}\right)$ & \cite{nesterov2017efficiency} \\ \hline
Catalyst CDM & $ \mathcal{O}\left(s\right)$  & $\displaystyle \widetilde{\mathcal{O}}\left(s \cdot n \cdot \frac{1}{\varepsilon^{1/2}}\right)$ & this paper \\ \hline
\end{tabular}
\end{table}

\subsection{Numerical experiments} \label{exp}

This section describes the character of the practical behavior of the proposed method by the example of the following optimization problem for the SoftMax-like function:
\begin{equation}\label{eq:softmax}
\min\limits_{x \in \mathbb{R}^n}~~ \{f(x) = \gamma \ln \left( \sum_{j=1}^m \exp\left(\frac{\left[A x\right]_j}{\gamma}\right) \right) -\langle b,x \rangle\},
\end{equation}
where $b \in \mathbb{R}^n$, $A \in \mathbb{R}^{m \times n}, \gamma \in \mathbb{R}_{+}$. Problems of this kind are essential for many applications, in particular, they arise in entropy-linear programming problems as a dual problem \cite{chernov2016method,gasnikov2016effective}, in particular in optimal transport problem, is also a smoothed approximation of the $\max$ function  (which gave the function the name SoftMax) and, accordingly, the norm $\|\cdot\|_{\infty}$, which may be needed in some formulations of the PageRank problem or for solving systems of linear equations. Moreover, in all the described problems, an important special case is the sparse setting, in which the matrix $A$ is sparse, that is, the average number of nonzero entries in the row $A_j$ does not exceed some $s \ll n$ (it will also be convenient to assume the possibility one of the strings $A_j$ is completely non-sparse).

Let us formulate the possessed  properties by the function $f$\;\cite{gasnikov2018modern}:
\begin{enumerate}
    \item $f$ is differentiable;\mynobreakpar
    \item $\nabla f$ satisfies the Lipschitz condition with the constant $L = \max_{j=1,...,m} \|A_j\|_2^2$;\mynobreakpar
    \item $\nabla_i f$ satisfy the component-wise Lipschitz condition with the constants $L_i = \max_{j=1,...,m} |A_{j i}|$.
\end{enumerate}

Let us write the expression for the $i$-th component of the gradient of the function $f$:
$$\nabla_i f(x) = \frac{\sum_{j=1}^m{A_{j i} \exp\left(\left[A x\right]_j\right)}}{\sum_{j=1}^m{\exp\left(\left[A x\right]_j\right)}}.$$
As we can see, the naive calculation of this expression can take time comparable to the calculation of the whole gradient and it will significantly affect the computational complexity, and hence the working time of the method. However, at the same time, many terms in this expression can be recalculated either infrequently or in a component-wise manner, and used as members of methods additional sequences when performing a step of method, so that the complexity of the iteration will remain efficient, and the use of the coordinate descent methods will be justified. For the convenience of describing the computational used methods, we write the step of the coordinate descent algorithm in such form:
$$y_{k+1} = y_{k} + \eta e_i,$$
where $\eta$ is the step size, multiplied by the corresponding gradient component, $e_i$ is the $i$-th unit basis vector. 

In order to the performed experiments:
\begin{enumerate}
    \item We will store a sequence of values $\left\{\exp\left(\left[A y_k\right]_j\right)\right\}_{j=1}^m$, used to calculate the sum in the numerator. Updating these values after executing a method step takes algorithmic complexity $\mathcal{O}(s)$, due to the $A y_{k+1} = A y_k + \eta A_i$, and at the same time $A_i$ has at most $s$ nonzero components, which means that it will be necessary to calculate only less than or equal to $s$ correcting factors and multiply the corresponding values from the sequence by them.
    \item From the first point, it can be understood that the multiplication of sparse vectors should be performed in $\mathcal{O}(s)$, considering only nonzero components. In terms of program implementation, this means the need to use a sparse representation for cached values and for rows of the matrix $A$, that is, storing only index-value pairs for all nonzero elements. Then, obviously, the complexity of arithmetic operations for such vectors will be proportional to the complexity of a loop with elementary arithmetic operations, the number of iterations of which is equal to the number of nonzero elements (in the python programming language, for example, this storage format is implemented in the method scipy.sparse.csr\_matrix \cite{scipy}). 
    \item Similarly, we will store the value $\sum_{j=1}^m{\exp\left(\left[A y_k\right]_j\right)}$, which is the denominator of the presented expression. Its updating is carried out with the same complexity as updating a sequence (by calculating the sum of nonzero terms added to each value from the sequence).
    \item Since evaluating the specified expression requires evaluating exponent values, type overflow can occur. To solve this problem, the standard technique is exp-normalize trick \cite{blanchard2019accurately}. However, to use it, one should also store the value $\max_{j=1,...,m} \left[A y_k\right]_j$. At the same time, there is no need to maintain exactly this value, or, otherwise, its approximation to keep the exponents values small, so this value can be recalculated much rarely: for example, once in $m$ iterations (in this case, the amortized complexity will also be equal to $\mathcal{O}(s)$).
\end{enumerate}

So, in the further reasoning, one can assume that the iteration of the coordinate descent algorithm for solving the corresponding auxiliary problem has amortized complexity $\mathcal{O}(s)$.

Further, let us consider in more detail the question of the values of the smoothness constants of this functional. We can write down asymptotic formulas for
$L$ and $\overline{L} = \frac{1}{n} \sum_{i=1}^n L_i$:
$$L = \max_{j=1,...,m} \|A_j\|_2^2 = \mathcal{O}(n), \quad \overline{L} = \frac{1}{n} \sum_{i=1}^n \max_{j=1,...,m} |A_{j i}| = \mathcal{O}(1).$$
Using these estimates, let us refine the computational complexity of the FGM and Catalyst CDM (CCDM) methods as applied to this problem:
$$T_{FGM} = \mathcal{O}\left(s \cdot n^{3/2} \cdot \frac{1}{\varepsilon^{1/2}}\right), \quad T_{CCDM} = \widetilde{\mathcal{O}}\left(s \cdot n \cdot \frac{1}{\varepsilon^{1/2}}\right).$$
Thus, in theory, the application of the Catalyst CDM method for solving this problem allows, in comparison with FGM, to reduce the factor of order $\mathcal{O}(\sqrt{n})$ in the asymptotic estimate of the computational complexity. In practice, this means that it is reasonable to apply the proposed method to problems of large dimensions.

Let us now compare the performance of the proposed approach (Catalyst CDM) with a number of alternative approaches: Gradient Method (GM), Fast Gradient Method (FGM), Coordinate Descent Method (CDM) and Accelerated Coordinate Descent Method (ACDM), by the example of the problem~\eqref{eq:softmax} with an artificially generated matrix $A$ in two different ways. Fig. \ref{fig:umc1} and \ref{fig:umc2} present plots of the convergence of the methods under consideration: in x-axis we show the running time of the methods in seconds, and in the y-axis we show the residual of the function on a logarithmic scale ($f_*$ calculated by searching for the corresponding point $x_*$ using the FGM method, tuned for an accuracy that is obviously much higher than that possible to achieve at the selected time interval).

\begin{figure}[!ht]
\begin{minipage}{0.49\textwidth}
    \centering
    \includegraphics[width=\linewidth]{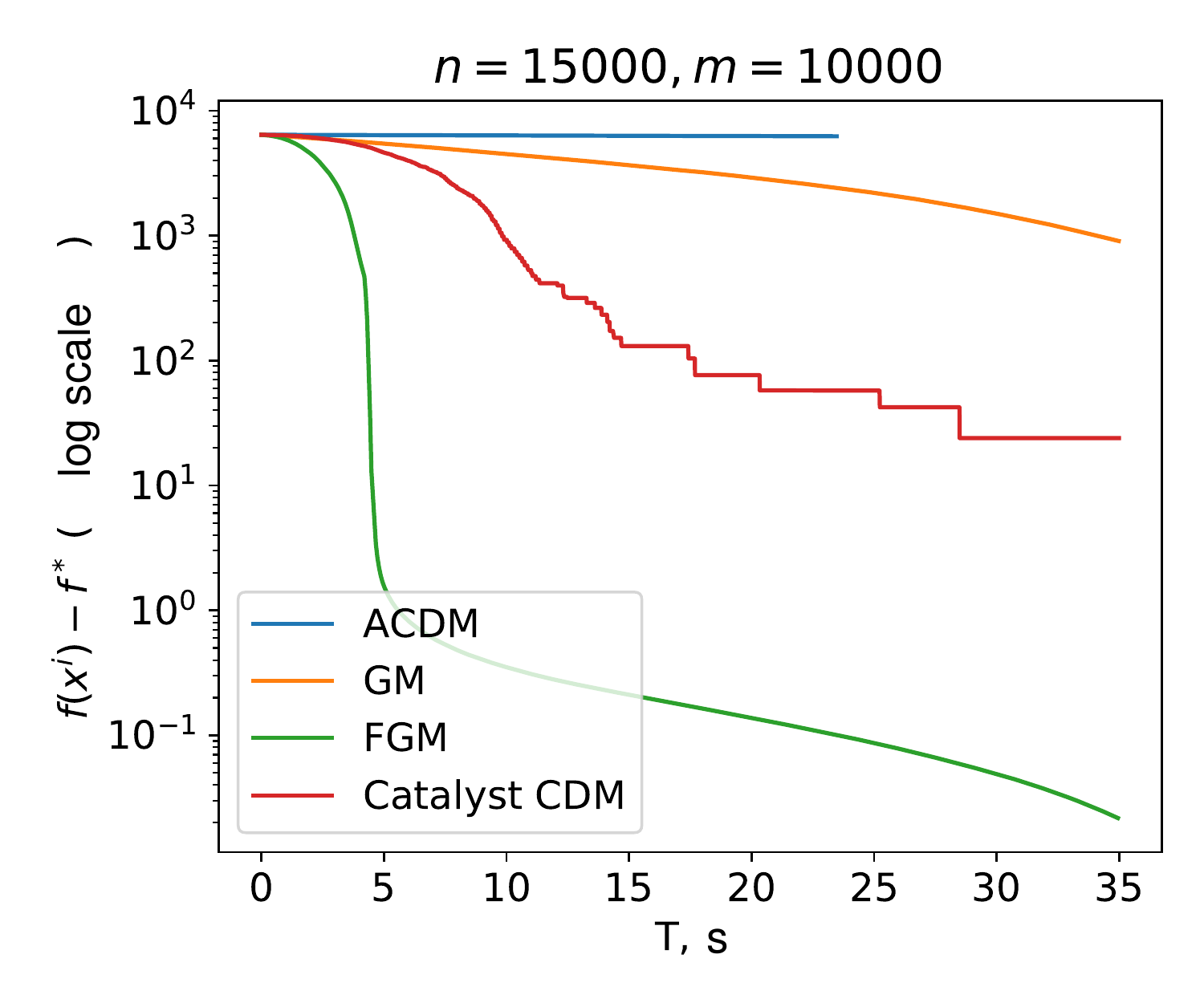}
    \caption{Convergence of methods for the SoftMax problem~\eqref{eq:softmax} with a uniformly sparse random matrix.}
    \label{fig:umc1}
\end{minipage}
\hfill
\begin{minipage}{0.49\textwidth}
    \centering
    \includegraphics[width=\linewidth]{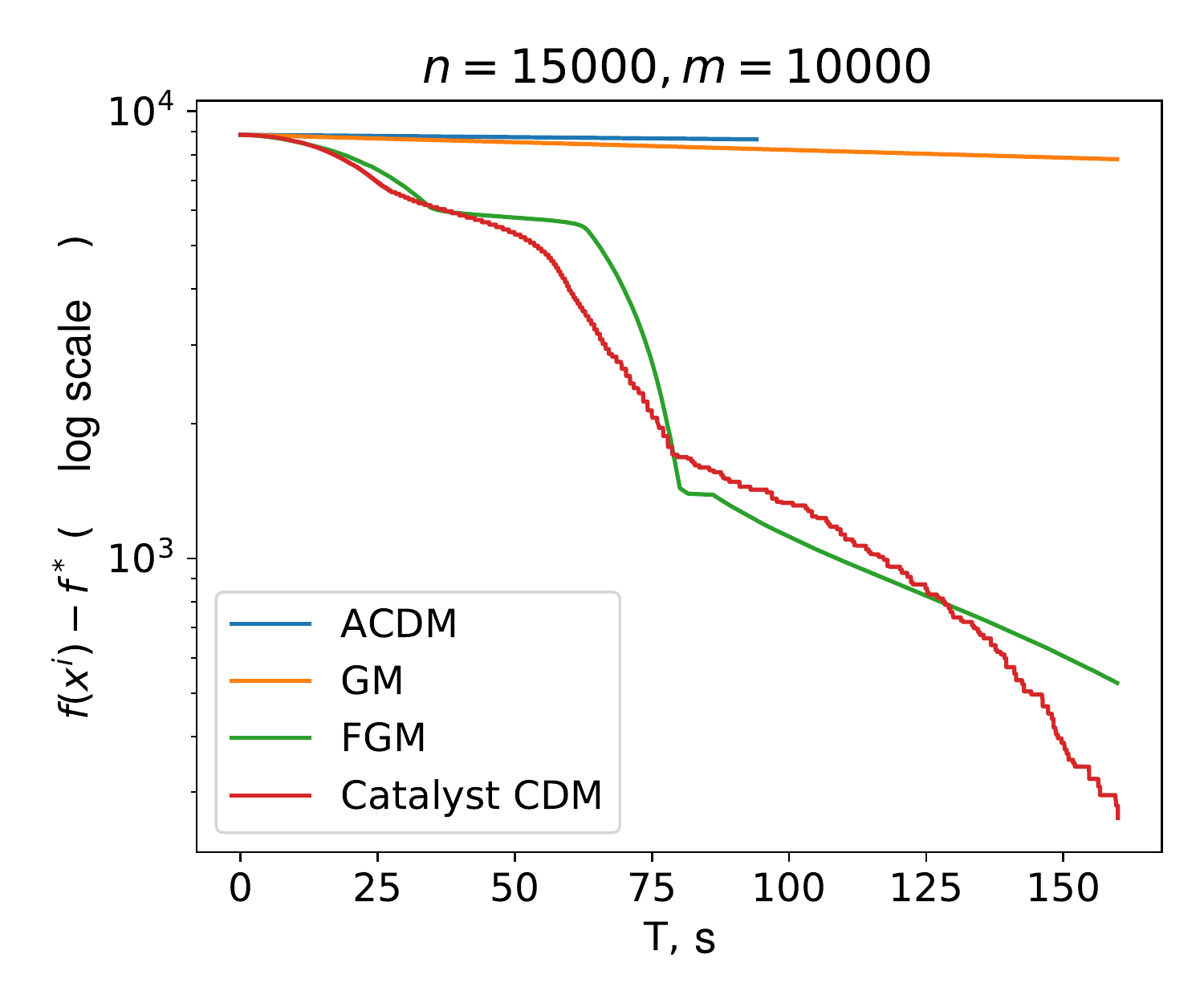}
    \caption{Convergence of methods for the SoftMax problem~\eqref{eq:softmax} with heterogeneously sparse matrix.}
    \label{fig:umc2}
\end{minipage}
\end{figure}

In Fig.~\ref{fig:umc1}, the case is presented for which all elements of the matrix $A$ are i.i.d. random variables from the discrete uniform distribution $A_{ji} \in \mathcal{U} \{0,1\}$, the number of nonzero elements is $s \approx 0.2 m$, and the parameter $\gamma = 0.6$ (as well as in the second case). In this setting, the proposed method demonstrates faster convergence compared to all methods under consideration, except the FGM. At the same time, in the setting shown in Fig.~\ref{fig:umc2}, in which the number of nonzero elements, in comparison with the first case, is increased to $ s \approx 0.75 m $, and the matrix is generated heterogeneously in accordance with the rule: $0.9 m$ rows with $0.1 n$ nonzero elements and $0.1 m$ rows with $0.9 n$ nonzero elements, and also one row of the matrix is completely nonsparse, the proposed method (Accelerated Meta-algorithm with coordinate descent as internal method) converges faster than FGM. This is explained by the fact that in this case $L = n$, but $\overline{L}$ is still quite small and, as a result, the constant in the proposed method has a noticeably smaller effect on the computational complexity than in the case of FGM. From the results of the experiment, it can also be noted that the character of its componentwise smoothness affects the efficiency of the proposed method much more significantly than the sparseness of the problem.

\subsection{Review of the results of related works}

In view of the conceptual simplicity of the scheme of the universal accelerated proximal envelope, especially in some of its special cases, such as those described in \cite{lin2015universal}, several variants of its program implementation were recently proposed, and also experiments were presented that represent the efficiency of these envelopes in the case of some typical problems. In order to capture the picture of situation in its entirety and to indicate the influence of subtle factors of implementation and adjustment of methods on their effectiveness, it makes sense to analyze some of the remarks and results of papers that consider ideologically similar problems in a slightly modified software environment. 

Let us refer to the recent review \cite{daspremont2021acceleration}, and specifically to the Catalyst clause of Section 5.6, offering remarks that are useful for the implementation of Catalyst-like methods, to the class of which the considered in this work special case of the ``Accelerated Meta-algorithm'' envelope belongs. One of the assumptions in this section is that efficient implementations of Catalyst-type methods (in particular, the implementation from \cite{mairal2019cyanure}) rely in solving auxiliary problems on the stop conditions, which are of an absolute nature, as opposed to relative (multiplicative) criteria, classically used to analyze the effectiveness of these methods. 
 
The proposed in \cite{mairal2019cyanure} approach is to calculate the duality gap, it is based on the construction described in Appendix D.2 of \cite{mairal2010sparse}. Following it, based on the information about the function at the current point, it is possible to obtain an upper bound for the absolute residual by function. The proposed solution is really elegant and possible, however, it has a number of drawbacks that do not allow us to call this solution quite convenient:
\begin{enumerate}
    \item the expression for the duality gap is computationally complex and implies the calculation of several matrix-vector
    products for potentially large dimensions;
    \item the duality gap in the described version can take on infinite values;
    \item the proposed relaxed version of the duality gap is even more computationally difficult, since
    requires additional implementation of the quadratic knapsack solver.
\end{enumerate}

At the same time, in this work there is a result that allows us to change the view on the problem of choosing the accuracy of solving the auxiliary problem (note that this problem, generally speaking, in the case of many other approaches remains open). Namely, in Corollary \ref{cor_num} it was proved that to obtain a solution to an auxiliary problem that satisfies a certain condition sufficient for its use within the outer shell, it is sufficient to perform a fixed number of iterations (depending only on the characteristics of the problem being solved). Moreover, this number (even in a certain strict sense) is small (if we discard the proportionality of the type $\sim n$, which is inevitable when using component-wise methods). Thus, following this scheme of proof, one can completely get away from the need to choose $\varepsilon_{tol}$ to the possibility of choosing the parameter $N$ is the number of iterations of the auxiliary method, without worrying about changing this parameter with an increase in the number of external iterations of the method. The practical benefit of this remark is that for many applied problems it is sometimes difficult to compute the constants characterizing the problem ($L$, $\overline{L}$), especially when optimizing functions for which the analytical expression is not known in advance. In this case, the $\varepsilon_{tol}$ parameter (as well as $ N $) becomes the hyperparameter of the algorithm, and its selection becomes the task of the engineer applying the method. However, the search space for $ N $ is discrete and essentially small, in contrast to the continually large set of  $\varepsilon_{tol}$ values variants, and both with manual selection or selection using techniques such as grid search, the problem of selecting $N$ turns out to be much simpler. 

Let us consider the results of experiments presented in the previously mentioned work \cite{mairal2019cyanure}. They are remarkable in that they assess the acceleration effect using the Catalyst envelope methods for optimization problems for sum-like functions with different regularizers, that is, statements close to machine learning problems, using some practical model examples. The general conclusion from the experiments carried out in this article is as follows: accelerating SVRG using the Catalyst envelope has no effect in the case when the optimal problem regularization parameter is $ \approx 1 / n $. It is important to note, however, that in the same situation, the MISO \cite{mairal2015incremental} method can be accelerated. In the work itself, for a different setting, an explanation is given for this fact: MISO better handle sparse matrices (no need to code lazy update strategies, which can be painful to implement). At the same time, as you can see from the figures, the Catalyst loss to directly accelerated SVRG is small. Hence, we can conclude that in the case when the implementation of lazy operations for the problem under consideration is not so onerous (the SoftMax case, considered in detail in this article, is just acceptable, besides the regularization parameter in this setting is equal to zero), the use of Catalyst remains quite justified and efficient.

\subsection{Application to Optimization of Markov Decision Processes} \label{mdp}

We denote an MDP instance by a tuple $\mathcal{MDP} := (\mathcal{S}, \mathcal{A}, \mathcal{P}, r, \gamma)$ with components defined as follows:
\begin{enumerate}
    \item $\mathcal{S}$ is a finite set of states, $|\mathcal{S}| = n$;
    \item $\mathcal{A} = \bigcup_{i \in \mathcal{S}} \mathcal{A}_i$ is a finite set of actions that is a collection of sets of actions $\mathcal{A}_i$ for states $i \in \mathcal{S}$, $|\mathcal{A}| = m$;
    \item $\mathcal{P}$ is the collection of state-to-state transition probabilities where $\mathcal{P} := \{p_{ij}(a_i) | i,j \in \mathcal{S}, a_i \in \mathcal{A}_i\}$;
    \item $r$ is the vector of state-action transitional rewards where $r \in [0, 1]^\mathcal{A}$, $r_{i,a_i}$ is the instant reward received when taking action ai at state $i \in \mathcal{S}$;
    \item $\gamma$ is the discount factor of MDP, by which one down-weights the reward in the next future step. When $\gamma \in (0, 1)$, we call the instance a discounted MDP (DMDP) and when $\gamma = 1$, we call the instance an average-reward MDP (AMDP).
\end{enumerate}

Let as denote by $P \in \mathbb{R}^{\mathcal{A} \times \mathcal{S}}$ the state-transition matrix where its $(i, a_i)$-th row corresponds to the transition probability from state $i \in \mathcal{S}$ where $a_i \in \mathcal{A}_i$ to state $j$. Correspondingly we use $\hat{I}$ as the matrix with $a_i$-th row corresponding to $e_i$, for all $i \in \mathcal{S}$, $a_i \in \mathcal{A}_i$. Our goal is to compute a random policy which determines which actions to take at each state. A random policy is a collection of probability distributions $\pi := \{\pi_i\}_{i\in\mathcal{S}}$ , where $\pi_i \in \Delta_{|\mathcal{A}_i|}$, $\pi_i(a_j)$ denotes the probability of taking $a_j \in \mathcal{A}_i$ at state $i$. One can extend $\pi_i$ to the set of $\Delta_m$ by filling in zeros on entries corresponding to other states $j \neq i$. Given an MDP instance $\mathcal{MDP} = (\mathcal{S}, \mathcal{A}, \mathcal{P}, r, \gamma)$ and an initial distribution over states $q \in \Delta_n$ , we are interested in finding the optimal $\pi^\star$ among all policies $\pi$ that maximizes the following cumulative reward $\overline{v}_\pi$ of the MDP:
\begin{align*}
    &\pi^\star := \arg \max_\pi \overline{v}^\pi,\\
    &\overline{v}^\pi := \left\{
\begin{array}{*{3}c}
   &\displaystyle \mathbb{E}^\pi \left[\sum_{t=1}^\infty \gamma^{t-1} r_{i_t, a_t} \big| i_1 \sim q \right] & \text{ in the case of } DMDP,\\
   &\displaystyle \lim_{T \rightarrow \infty} \frac{1}{T}\mathbb{E}^\pi \left[\sum_{t=1}^T r_{i_t, a_t} \big| i_1 \sim q \right] & \text{ in the case of } AMDP.
\end{array}
\right.
\end{align*}

For AMDP, $\overline{v}^\star$ is the optimal average reward if and only if there exists a vector $v^\star = (v_i^\star)_{i \in \mathcal{S}}$ satisfying its corresponding Bellman equation \cite{bertsekas1995dynamic}:
\[
    \overline{v}^\star + v_i^\star = \max_{a_i \in \mathcal{A}_i} \left\{ \sum_{j \in \mathcal{S}} p_{ij}(a_i) v_j^\star + r_{i, a_i} \right\}, \forall i \in \mathcal{S}.
\]
For DMDP, one can show that at optimal policy $\pi^\star$, each state $i \in \mathcal{S}$ can be assigned an optimal cost-to-go value $v_i^\star$ satisfying the following Bellman equation:
\[
    v_i^\star = \max_{a_i \in \mathcal{A}_i} \left\{ \sum_{j \in \mathcal{S}} \gamma p_{ij}(a_i) v_j^\star + r_{i, a_i} \right\}, \forall i \in \mathcal{S}.
\]
One can further write the above Bellman equations equivalently as the following primal linear programming problems.
\begin{align*}
    &\min_{\overline{v}, v} \quad\overline{v} &\; \text{s.t.}\;\;\; \overline{v} \textbf{1} + (\hat{I} - P) v - r \geq 0&\quad \textbf{(LP AMDP)}\\
    &\min_{v} \quad(1 - \gamma)q^\top v &\; \text{s.t.}\;\;\; (\hat{I} - \gamma P)v + r \geq 0 &\quad\textbf{(LP DMDP)}
\end{align*}
By standard linear duality, we can recast the problem formulation using the method of Lagrangian multipliers, as bi-linear saddle-point (minimax) problem. The equivalent minimax formulations are
\begin{align*}
    &\min_{v \in \mathbb{R}^n} \left\{ F(v)= \max_{\mu \in \Delta_m} \left(\mu^\top ((P - \hat{I})v + r)\right) \right\} &\quad &\textbf{(AMDP)}\\
    &\min_{v \in \mathbb{R}^n} \left\{ F_{\gamma}(v)= \max_{\mu \in \Delta_m} \left((1 - \gamma)q^\top v +  \mu^\top ((\gamma P - \hat{I})v + r)\right) \right\} &\quad &\textbf{(DMDP)}
\end{align*}

Then, one can apply the Nesterov smoothing technique to the presented max-type functional, according to \cite{nesterov2005smooth}. (The calculation is presented for the case of DMDP, as a more general one. A smoothed version of the AMDP functional is obtained similarly with the notation $A := P - \hat{I}$ and $\gamma = 1$):
\begin{align*}
    F_{\gamma}(v) &= \max_{\mu \in \Delta_m} \left(\underbrace{(1 - \gamma)q^\top}_{b} v +  \mu^\top (\underbrace{(\gamma P - \hat{I})}_{A} v + r)\right) = \max_{\mu \in \Delta_m} \left(\sum_{j=1}^m \mu_j (\left[A v\right]_j + r_j)\right) + \langle b, v\rangle\\
    &\rightarrow \max_{\mu \in \Delta_m} \left(\sum_{j=1}^m \mu_j (\left[A v\right]_j + r_j) - \sigma \sum_{j=1}^m \mu_j \ln\left(\frac{\mu_j}{1/m}\right)\right) + \langle b, v\rangle =\\
    &= \sigma \ln \left( \sum_{j=1}^m \exp\left(\frac{\left[A v\right]_j + r_j}{\sigma}\right) \right) - \sigma \ln m - \langle b,v \rangle =: f_{\gamma}(v),\text{ where }\sigma := \varepsilon / (2 \ln{m}).
\end{align*}

The resulting problem has the form of a SoftMax function, discussed in detail in the previous section. Taking into account the form of the matrix A, we can calculate the average component-wise Lipschitz constant:
\[
    P_{ji} \in [0, 1], \hat{I}_{ji} \in \{0, 1\}\quad\Longrightarrow\quad\overline{L} = \frac{1}{\sigma} \frac{1}{n} \sum_{i=1}^n \max_{j=1,...,m} |A_{ji}| \leq \frac{\gamma}{\sigma} = \frac{2 \gamma \ln m}{\varepsilon}
\]

So, one can get the following estimates, which are also given in the Tables~\ref{table_compare2},~\ref{table_compare2b} (transition from the duality gap accuracy $\varepsilon$ in matrix game setting to the $\widetilde{\varepsilon}$ accuracy to obtain the $\widetilde{\varepsilon}$-approximate optimal policy satisfying the condition in expectation $\mathbb{E}\overline{v}^\pi \geq \overline{v}^\star - \widetilde{\varepsilon}$ is carried out according to the rules described in more detail in the work \cite{jin2020efficiently}):
\begin{align*}
    \varepsilon \sim \frac{1}{2} \cdot \frac{\widetilde{\varepsilon}}{3}\quad\Longrightarrow\quad&T_{CCDM} \;\;= \widetilde{\mathcal{O}}\left(\text{nnz}(P) \sqrt{\log m} \cdot \widetilde{\varepsilon}^{-1} \right),\\
    \varepsilon_{\gamma} \sim \frac{1}{2} \cdot \frac{(1 - \gamma)\widetilde{\varepsilon}}{3}\quad\Longrightarrow\quad&T_{CCDM, \gamma} = \widetilde{\mathcal{O}}\left(\gamma^{1/2} (1-\gamma)^{-1} \cdot \text{nnz}(P) \sqrt{\log m} \cdot \widetilde{\varepsilon}^{-1} \right),
\end{align*}
where $\text{nnz}(P) \leq n \cdot m$ denotes the number of nonzero elements in matrix $P$. In addition to the result corresponding to the considered method, the Tables~\ref{table_compare2},~\ref{table_compare2b} contain complexity bounds of other known approaches to solving matrix games and MDP problems (for the sake of compactness, it is used the notation $\text{nnz}'(P) = \text{nnz}(P) + (m + n) \log^3 (mn)$). It can be seen that for the case of $\gamma = 1$, the described approach allowing to obtain one of the best among the known estimates, and in the case of $\gamma < 1$ it is close in efficiency to many modern approaches. Moreover, to describe the method used in this article, it was enough to apply only a special case of the universal accelerated proximal envelope for the classical coordinate descent method. This approach is conceptually much simpler than the other methods cited here (which, by the way, are often applicable only to very particular settings), and allows one to obtain complexity bounds for AMDP problem that notedly competitive with the best alternatives. 

\begin{table}[ht]
\centering

\caption{Comparison of the effectiveness of approaches ($\gamma=1$ case)}
\label{table_compare2}
\begin{tabular}{|c|c|}
\hline
Computational complexity & Source \\ \hline

$\displaystyle \widetilde{\mathcal{O}}\left(\text{nnz}(P) \sqrt{\log m} \cdot \widetilde{\varepsilon}^{-1} \right)$ & this paper \\ \hline

$\displaystyle \widetilde{\mathcal{O}}\left(\text{nnz}(P) \sqrt{m / n} \cdot \widetilde{\varepsilon}^{-1} \right)$ & \cite{carmon2019variance}  \\ \hline

\;\;$\displaystyle \widetilde{\mathcal{O}}\left(\log^3 (mn)\;\sqrt{\text{nnz}(P) \cdot \text{nnz}'(P)} \cdot \widetilde{\varepsilon}^{-1} \right)$\;\; & \cite{carmon2020coordinate} \\ \hline
\end{tabular}
\end{table}
\vspace{-1.0cm}

\begin{table}[ht]
\centering
\caption{Comparison of the effectiveness of approaches ($\gamma \in (0, 1)$ case)}
\label{table_compare2b}
\begin{tabular}{|c|c|}
\hline
Computational complexity & Source \\ \hline

$\displaystyle \widetilde{\mathcal{O}}\left(\gamma^{1/2} (1-\gamma)^{-1} \; \text{nnz}(P) \sqrt{\log m} \cdot \widetilde{\varepsilon}^{-1} \right)$ & this paper \\ \hline

$\displaystyle \widetilde{\mathcal{O}}\left(\gamma (1-\gamma)^{-1}\;\text{nnz}(P) \sqrt{m / n} \cdot \widetilde{\varepsilon}^{-1} \right)$ & \cite{carmon2019variance}  \\ \hline

\;\;$\displaystyle \widetilde{\mathcal{O}}\left(\gamma (1-\gamma)^{-1} \;\log^3 (mn)\;\sqrt{\text{nnz}(P) \cdot \text{nnz}'(P)} \cdot \widetilde{\varepsilon}^{-1} \right)$ & \cite{carmon2020coordinate}\;\; \\ \hline

$\displaystyle \widetilde{\mathcal{O}}\left(nm\;\left(n + \left(1-\gamma\right)^{-3}\right) \cdot \log{\left(\widetilde{\varepsilon}^{-1}\right)} \right)$ & \cite{sidford2018variance}  \\ \hline
\end{tabular}
\end{table}
\vspace{-1.0cm}

\section{Conclusion} 

In this paper, we propose a version of the Coordinate Descent Method, accelerated using the universal proximal envelope ``Accelerated Meta-algorithm''. The performed theoretical analysis of the proposed method allows us to assert that the dependence of its computational complexity on the dimension of the problem and the required solution accuracy is not inferior to other methods used to optimize convex Lipschitz smooth functions, and the computational complexity is comparable to that of the Fast Gradient Method. At the same time, the proposed scheme retains the properties of the classical Coordinate Descent Method, including the possibility of using the properties of componentwise smoothness of the function. The given numerical experiments confirm the practical efficiency of the method, and also emphasize the particular relevance of the proposed approach for the problem of optimizing a SoftMax-like function that often arises in various applications. As an example of such an application, it was considered the problem of optimizing the MDP, and using the described approach, a method was proposed for solving the averaged version of the MDP problem, which gives a complexity bound that competes with the most efficient known ones. 

\bibliographystyle{splncs04}
\bibliography{main}

\begin{thebibliography}{10}
\providecommand{\url}[1]{\texttt{#1}}
\providecommand{\urlprefix}{URL }
\providecommand{\doi}[1]{https://doi.org/#1}

\bibitem{anikin2015modern}
Anikin, A., Dvurechensky, P., Gasnikov, A., Golov, A., Gornov, A., Maximov, Y.,
  Mendel, M., Spokoiny, V.: Modern efficient numerical approaches to
  regularized regression problems in application to traffic demands matrix
  calculation from link loads. In: Proceedings of International conference
  ITAS-2015. Russia, Sochi (2015)

\bibitem{bertsekas1995dynamic}
Bertsekas, D.P.: Dynamic programming and optimal control. Athena scientific
  Belmont, MA (1995)

\bibitem{blanchard2019accurately}
Blanchard, P., Higham, D.J., Higham, N.J.: Accurately Computing the Log-Sum-Exp
  and Softmax Functions (2019)

\bibitem{bubeck2014convex}
Bubeck, S.: Convex optimization: Algorithms and complexity. arXiv preprint
  arXiv:1405.4980  (2014)

\bibitem{bubeck2019near}
Bubeck, S., Jiang, Q., Lee, Y.T., Li, Y., Sidford, A.: Near-optimal method for
  highly smooth convex optimization. In: Conference on Learning Theory. pp.
  492--507. PMLR (2019)

\bibitem{carmon2019variance}
Carmon, Y., Jin, Y., Sidford, A., Tian, K.: Variance reduction for matrix
  games. In: Advances in Neural Information Processing Systems. pp.
  11381--11392 (2019)

\bibitem{carmon2020coordinate}
Carmon, Y., Jin, Y., Sidford, A., Tian, K.: Coordinate methods for matrix
  games. arXiv preprint arXiv:2009.08447  (2020)

\bibitem{chernov2016method}
Chernov, A.: Direct-dual method for solving the entropy-linear programming
  problem. Intelligent systems. Theory and applications  \textbf{20}(1),
  39--59 (2016)

\bibitem{daspremont2021acceleration}
d'Aspremont, A., Scieur, D., Taylor, A.: Acceleration methods (2021)

\bibitem{doikov2020contracting}
Doikov, N., Nesterov, Y.: Contracting proximal methods for smooth convex
  optimization. SIAM Journal on Optimization  \textbf{30}(4),  3146--3169
  (2020)

\bibitem{dvinskikh2020accelerated}
Dvinskikh, D., Kamzolov, D., Gasnikov, A., Dvurechensky, P., Pasechnyuk, D.,
  Matykhin, V., Chernov, A.: Accelerated meta-algorithm for convex
  optimization. arXiv preprint arXiv:2004.08691  (2020)

\bibitem{gasnikov2016effective}
Gasnikov, A., Gasnikova, E., Nesterov, Y., Chernov, A.: Efficient numerical
  methods for entropy-linear programming problems. omputational Mathematics And
  Mathematical Physics  \textbf{56}(4),  523--534 (2016)

\bibitem{gasnikov2018modern}
Gasnikov, A.: Universal gradient descent. arXiv preprint arXiv:1711.00394
  (2017)

\bibitem{ivanova2019adaptive}
Ivanova, A., Pasechnyuk, D., Grishchenko, D., Shulgin, E., Gasnikov, A.,
  Matyukhin, V.: Adaptive catalyst for smooth convex optimization. arXiv
  preprint arXiv:1911.11271  (2019)

\bibitem{jin2020efficiently}
Jin, Y., Sidford, A.: Efficiently solving mdps with stochastic mirror descent.
  In: International Conference on Machine Learning. pp. 4890--4900. PMLR (2020)

\bibitem{kamzolov2020optimal}
Kamzolov, D., Gasnikov, A., Dvurechensky, P.: On the optimal combination of
  tensor optimization methods. arXiv preprint arXiv:2002.01004  (2020)

\bibitem{kulunchakov2019generic}
Kulunchakov, A., Mairal, J.: A generic acceleration framework for stochastic
  composite optimization. In: Advances in Neural Information Processing
  Systems. pp. 12556--12567 (2019)

\bibitem{lin2015universal}
Lin, H., Mairal, J., Harchaoui, Z.: A universal catalyst for first-order
  optimization. Advances in neural information processing systems  \textbf{28},
   3384--3392 (2015)

\bibitem{lin2017catalyst}
Lin, H., Mairal, J., Harchaoui, Z.: Catalyst acceleration for first-order
  convex optimization: from theory to practice. The Journal of Machine Learning
  Research  \textbf{18}(1),  7854--7907 (2017)

\bibitem{mairal2010sparse}
Mairal, J.: Sparse coding for machine learning, image processing and computer
  vision. Ph.D. thesis, Cachan, Ecole normale sup{\'e}rieure (2010)

\bibitem{mairal2015incremental}
Mairal, J.: Incremental majorization-minimization optimization with application
  to large-scale machine learning. SIAM Journal on Optimization
  \textbf{25}(2),  829--855 (2015)

\bibitem{mairal2019cyanure}
Mairal, J.: Cyanure: An open-source toolbox for empirical risk minimization for
  python, c++, and soon more. arXiv preprint arXiv:1912.08165  (2019)

\bibitem{monteiro2013accelerated}
Monteiro, R.D., Svaiter, B.F.: An accelerated hybrid proximal extragradient
  method for convex optimization and its implications to second-order methods.
  SIAM Journal on Optimization  \textbf{23}(2),  1092--1125 (2013)

\bibitem{nesterov2005smooth}
Nesterov, Y.: Smooth minimization of non-smooth functions. Mathematical
  programming  \textbf{103}(1),  127--152 (2005)

\bibitem{nesterov2012efficiency}
Nesterov, Y.: Efficiency of coordinate descent methods on huge-scale
  optimization problems. SIAM Journal on Optimization  \textbf{22}(2),
  341--362 (2012)

\bibitem{nesterov2018lectures}
Nesterov, Y.: Lectures on convex optimization, vol.~137. Springer (2018)

\bibitem{nesterov2017efficiency}
Nesterov, Y., Stich, S.U.: Efficiency of the accelerated coordinate descent
  method on structured optimization problems. SIAM Journal on Optimization
  \textbf{27}(1),  110--123 (2017)

\bibitem{paquette2017catalyst}
Paquette, C., Lin, H., Drusvyatskiy, D., Mairal, J., Harchaoui, Z.: Catalyst
  acceleration for gradient-based non-convex optimization. arXiv preprint
  arXiv:1703.10993  (2017)

\bibitem{parikh2014proximal}
Parikh, N., Boyd, S.: Proximal algorithms. Foundations and Trends in
  optimization  \textbf{1}(3),  127--239 (2014)

\bibitem{rockafellar1976monotone}
Rockafellar, R.T.: Monotone operators and the proximal point algorithm. SIAM
  journal on control and optimization  \textbf{14}(5),  877--898 (1976)

\bibitem{scipy}
SciPy.org: Python Scipy documentation: \text{scipy.sparse.csr\_matrix} (2020
  (accessed January 5, 2021)),
  \url{https://docs.scipy.org/doc/scipy/reference/generated/scipy.sparse.csr\_matrix.html}

\bibitem{sidford2018variance}
Sidford, A., Wang, M., Wu, X., Ye, Y.: Variance reduced value iteration and
  faster algorithms for solving markov decision processes. In: Proceedings of
  the Twenty-Ninth Annual ACM-SIAM Symposium on Discrete Algorithms. pp.
  770--787. SIAM (2018)

\end{thebibliography}

\appendix
\addcontentsline{toc}{section}{Appendices}
\section*{Appendices}
\section{Proof of the Corollary \ref{cor_num}} \label{cor_num_proof}

Due to the $(H+L)$-Lipschitz smoothness of the  function $F$ it holds the following inequality: $\displaystyle F(y_0) - F(y_*) \leq \frac{H+L}{2} \|y_0 - y_*\|_2^2$. Using this inequality together with the estimation \eqref{f_conv}, one can write out a condition of achieving the required accuracy $\widetilde{\varepsilon}$ with respect to the function value: $\displaystyle \frac{H+L}{2} \left(1 - \frac{1}{\kappa}\right)^{N} \|y_0 - y_*\|_2^2 \leq \widetilde{\varepsilon}$. Also, the relation $1 - 1/\kappa \leq \exp\{-1/\kappa\}$ holds and therefore one can state out: 
    $$\displaystyle \frac{H+L}{2} \exp\{\kappa / N\} \|y_0 - y_*\|_2^2 \leq \widetilde{\varepsilon}.$$ 
    By taking the logarithm and substituting the expression for $\kappa$, we get an expression for the number of iterations from $\widetilde{\varepsilon}$:
    \begin{equation} \label{itt}
        N(\widetilde{\varepsilon}) = \ceil[\Bigg]{\frac{Z}{H} \ln {\left\{\frac{(H+L) \|y_0 - y_*\|_2^2}{2 \widetilde{\varepsilon}}\right\}}}.
    \end{equation}
    Due to the $H$-strongly convexity of the function $F$ it holds the following inequality $\displaystyle \|\overline{y}_N - y_*\|_2^2 \leq \frac{2}{H} (F(\overline{y}_N) - F(y_*))$, where $\overline{y}_N = \mathbb{E}[y_N]$. The function $F$ is convex, and therefore $F(\overline{y}_N) \leq \mathbb{E}[F(y_N)]$ (Jensen inequality), and from this, together with \eqref{crit}, one can obtain a sufficient condition of achieving a solution to the auxiliary problem:
    $$\mathbb{E}[F(y_N)] - F(y_*) \leq \frac{H}{2} \left(\frac{H}{3H + 2L}\right)^2 \|y_0 - y_*\|_2^2.$$
    Substituting into the formula \eqref{itt} instead of $\widetilde{\varepsilon}$ the expression from the right-hand side of this inequality, we arrive at the desired result.

\section{Proof of the Theorem \ref{th4}} \label{th4_proof}

Let us denote by $N$ the number of iterations of the internal method, performed in a particular random case until the stop condition is satisfied. Following the context of the proof of Corollary \ref{cor_num} and applying Markov inequality, we obtain the following estimate for the probability that the number of iterations of interest to us will exceed some given $k$:  

    \begin{align*}
        Pr(N \geq k) &\leq Pr\left(F(y_k) - F(y_*) \geq \frac{H}{2} \left(\frac{H}{3H + 2L}\right)^2 \|y_0 - y_*\|_2^2\right)\\
        &\leq \min\left\{1,\;\;\frac{\mathbb{E}[F(y_k)] - F(y_*)}{\frac{H}{2} \left(\frac{H}{3H + 2L}\right)^2 \|y_0 - y_*\|_2^2}\right\}\quad\text{(Markov inequality)}\\
        &\leq \min\left\{1,\;\;\left(1 + \frac{L}{H}\right) \left(3 + \frac{2L}{H}\right)^2 \exp\left\{\frac{\kappa}{k}\right\}\right\}
    \end{align*}
    Further, we can estimate by the infinite series the expectation of the number of iterations of the internal method:
    
    \begin{align*}
        \mathbb{E} [\hat{N}] &= \sum_{k=1}^\infty Pr(N \geq k) \\
        &\leq \int_0^{N(\widetilde{\varepsilon})} dk + \left(1 + \frac{L}{H}\right) \left(3 + \frac{2L}{H}\right)^2 \exp\left\{\kappa\right\} \cdot \int_{N(\widetilde{\varepsilon})}^\infty \exp\left\{-k\right\}\\
        &= N(\widetilde{\varepsilon}) + \left(1 + \frac{L}{H}\right) \left(3 + \frac{2L}{H}\right)^2 \exp\left\{\frac{\kappa}{N(\widetilde{\varepsilon})}\right\} \leq N(\widetilde{\varepsilon}) + 1.
    \end{align*}
    The number of external iterations of the method is constant, which means that when calculating the mean of the total number of iterations, this value can be taken out of the mean by applying for each internal iteration a single estimate given above, whence the proved statement follows.
    
\section{Proof of the Theorem \ref{th5}} \label{th5_proof}

Let us formulate the following obvious auxiliary proposition.

\begin{proposition} \label{constr}
    The algorithmic complexity of the accelerated meta-algorithm with coordinate descent as an internal method is
    $$T = \mathcal{O}\left(\widetilde{N} (T_{out} + N_{\delta} T_{inn})\right),$$
    where $T_{out}$ is the amortized algorithmic complexity of computations performed at the iteration of the accelerated meta-algorithm, $T_{inn}$ is the amortized algorithmic complexity of the iteration of the coordinate descent algorithm.
\end{proposition}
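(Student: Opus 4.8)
The plan is to prove the bound by a direct structural accounting of the work performed by Algorithm~\ref{am}, since the total complexity is simply the sum over all elementary operations executed during one run of the method. First I would observe that the Accelerated Meta-algorithm consists of a single outer loop executed $\widetilde{N}$ times, and that within each pass of this loop the work splits cleanly into two disjoint parts: the bookkeeping updates intrinsic to the meta-algorithm (the scalar recurrences for $a_{k+1}$ and $A_{k+1}$, the convex combination $\widetilde{x}_k$, and the gradient step producing $x_{k+1}$), whose amortized cost I denote $T_{out}$, and the call to the inner method $\mathcal{M}$ that solves the auxiliary subproblem.

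Next I would fix the cost of a single subproblem solve. By Theorem~\ref{am_stoch_conv}, running the coordinate descent method for $N_\delta$ iterations on each auxiliary problem suffices to guarantee the required argument accuracy with the prescribed confidence. Each such inner iteration has amortized algorithmic complexity $T_{inn}$, as established in the discussion of the lazy / cached implementation. Hence one auxiliary solve costs $N_\delta \cdot T_{inn}$, and the work of a single outer iteration is $T_{out} + N_\delta T_{inn}$.

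Finally I would sum over the $\widetilde{N}$ outer iterations. Since the outer loop count is fixed in advance and independent of the inner randomness, the total complexity is obtained by multiplying the per-outer-iteration cost by $\widetilde{N}$, giving
$$T = \mathcal{O}\left(\widetilde{N}(T_{out} + N_\delta T_{inn})\right),$$
as claimed.

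The only point requiring care — and the reason the statement is phrased in terms of \emph{amortized} complexities rather than worst-case per-iteration costs — is that several of the maintained quantities (for instance the max-value used in the exp-normalize stabilization, or the cached denominator and numerator sums) are refreshed only periodically rather than on every single iteration. The argument I would make is that summing the amortized per-iteration costs over the full run reproduces the true total cost up to a constant factor, so that no additional lower-order terms appear and the clean product form above is legitimate; beyond this subtlety, the proposition is a routine operation count, which is why it is stated as \emph{obvious}.
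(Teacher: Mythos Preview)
Your proposal is correct and matches the paper: the proposition is stated there as an ``obvious auxiliary proposition'' with no proof given, and your direct structural accounting (outer loop of $\widetilde{N}$ iterations, each costing $T_{out}$ plus $N_\delta$ inner iterations at $T_{inn}$ apiece) is precisely the intended justification. Your remark on amortization is appropriate and not in tension with anything the paper does.
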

Let us now reformulate the estimate from the Proposition~\ref{constr} as follows:
    $$T = \mathcal{O}\left(\hat{N} \cdot T_{iter}\right),$$
    where $T_{iter}$ is the amortized algorithmic complexity of an elementary iteration of the accelerated meta-algorithm, that is, an iteration that can be both an internal iteration of the coordinate descent algorithm and the main iteration of the meta-algorithm. The complexity of the main iteration of the meta-algorithm is determined, first of all, by calculating the full gradient vector of $f$, and the complexity of this procedure (from the condition of the theorem) is $\mathcal{O}(s \cdot n)$. At the same time, due to the $Z = n \overline{L}$, also holds $N_\delta = \widetilde{\mathcal{O}}(n)$, where symbol $\widetilde{\mathcal{O}}(\cdot)$ means the same as $\mathcal{O}(\cdot)$, but with the possible presence of factors of $\log(\cdot)$ order. Since the main iteration of the meta-algorithm is performed every $N_\delta$ elementary iterations, where $N_\delta$ is constant, this, using any of the methods of amortization analysis, trivially yields an amortized algorithmic complexity of the main iteration of the meta-algorithm, which is $\mathcal{O}(s)$. The complexity of the coordinate descent algorithm's iteration (if all operations performing ``in place``, instead of copying the values of every point, it is quite acceptable for this construction) is determined by calculating one component of the gradient, and is also $\mathcal{O}(s)$. From this obtain $T_{iter} = \mathcal{O}(s)$. Using the estimate \eqref{total_iters} and substituting the optimal value of $H$, we obtain the provided complexity of the method.

\end{document}